\newtheorem{Thm}{Theorem}[section]
\newtheorem{Cor}[Thm]{Corollary}
\newtheorem{Lem}[Thm]{Lemma}
\newtheorem{Prop}[Thm]{Proposition}
\theoremstyle{definition}
\newtheorem{Def}[Thm]{Definition}
\theoremstyle{remark}
\def \cal{\mathcal}
\def\eps{\varepsilon}
\def\Ndb{\mathbb N}
\begin{document}

\title{Some properties of coarse Lipschitz maps between Banach spaces}

\author{A. Dalet}

\address{Univ. Bourgogne Franche-Comt\'{e}, Laboratoire de Math\'{e}matiques de Besan\c{c}on UMR 6623,
16 route de Gray, 25030 Besan\c{c}on Cedex, FRANCE.}
\email{aude.dalet@univ-fcomte.fr}

\author{G. Lancien}

\address{Univ. Bourgogne Franche-Comt\'{e}, Laboratoire de Math\'{e}matiques de Besan\c{c}on UMR 6623,
16 route de Gray, 25030 Besan\c{c}on Cedex, FRANCE.}
\email{gilles.lancien@univ-fcomte.fr}


\subjclass[2010]{Primary 46B80; Secondary 46B03, 46B20}
\thanks{}

\keywords{coarse Lipschitz maps and equivalences, asymptotically uniformly smooth norms}

\maketitle

\begin{abstract} We study the structure of the space of coarse Lipschitz maps between Banach spaces. In particular we introduce the notion of norm attaining coarse Lipschitz maps. We extend to the case of norm attaining coarse Lipschitz equivalences, a result of G. Godefroy on Lipschitz equivalences. This leads us to include the non separable versions of classical results on the stability of the existence of asymptotically uniformly smooth norms under Lipschitz or coarse Lipschitz equivalences.
\end{abstract}

\section{Introduction} In a recent paper \cite{Godefroy2016} G. Godefroy studied various notions of norm attaining Lipschitz functions. If $(M,d)$ and $(N,\delta)$ are two metric spaces and $f:M\to N$ is Lipschitz, it is natural to say that $f$ attains its norm at the pair $(x,y)$ in $M\times M$ with $x\neq y$ if
$$\frac{\delta(f(x),f(y))}{d(x,y)}=Lip\,(f),$$
where $Lip\,(f)$ denotes the Lipschitz constant of $f$.\\
In \cite{Godefroy2016}, G. Godefroy introduced the following weaker form of norm attaining vector valued Lipschitz functions. Let $(M,d)$ be a metric space, $(Y,\|\ \|_Y)$ a Banach space and $f:M\to Y$ a Lipschitz map. We say that $f$ attains its norm in the direction $y\in S_Y$, where $S_Y$ denotes the unit sphere of $Y$, if there exists a sequence $(s_n,t_n)_{n=0}^\infty$ in $M\times M$ with $s_n\neq t_n$ and such that
$$\lim_{n\to \infty}\frac{f(s_n)-f(t_n)}{d(s_n,t_n)}=y\,Lip\,(f).$$
One of the main results of \cite{Godefroy2016} is that if a Lipschitz isomorphism $f$ between two Banach spaces $X$ and $Y$ attains its norm in the direction $y\in S_Y$, then there exists a constant $c>0$ such that $\overline{\rho}_Y(y,ct)\le 2\overline{\rho}_X(t)$, where $\overline{\rho}$ denotes the modulus of asymptotic uniform smoothness (see definitions in section \ref{auscle}). Then, noticing that this is impossible if one of the spaces is asymptotically uniformly flat and the other has a norm with the Kadets-Klee property, he provides examples of pairs of Banach spaces $(X,Y)$ for which the set of norm attaining Lipschitz maps, in this weaker sense, is not dense.

\medskip The starting point of this work was to notice that this argument could be adapted to the setting of coarse Lipschitz maps between Banach spaces $X$ and $Y$. This space of functions is a vector space on which a natural semi-norm is given by the Lipschitz constant at infinity of a coarse Lipschitz map. It is then natural to work with the corresponding quotient space that we shall denote $CL(X,Y)$.

\smallskip
In section \ref{clm} we introduce these basic definitions as well as the analogue of Godefroy's definition for norm attaining coarse Lipschitz maps.

\smallskip
In section \ref{cle} we define the notion of coarse Lipschitz equivalent Banach spaces, or quasi-isometric Banach spaces in the terminology introduced by M. Gromov in \cite{Gromov1987}. In Proposition \ref{CLE}, we gather some characterizations of the coarse Lipschitz equivalence between Banach spaces that were essentially known. In particular we describe the link with the notion of net equivalence of Banach spaces. We also insist on the existence of continuous representatives of coarse Lipschitz equivalences. This will be crucial in our further use of the Gorelik principle.

\smallskip
In section \ref{completeness} we address the question of the completeness of our normed quotient space $CL(X,Y)$. In Proposition \ref{complete} we give a sufficient condition for $CL(X,Y)$ to be complete. We also describe situations when the coarse Lipschitz equivalences can be viewed as an open subset of our quotient space.

\smallskip
In section \ref{Gorelik} we gather the necessary background on the so-called Gorelik principle. First, we recall its classical version for uniform homeomorphisms and Lipschitz isomorphisms. Then we prove in Theorem \ref{Gor2} a version of the Gorelik principle which is a variant of Theorem 3.8 in \cite{GodefroyLancienZizler2014} stated in terms of coarse Lipschitz equivalences instead of net equivalences of Banach spaces.

\smallskip
Section \ref{auscle} is devoted to the study of the preservation of the asymptotic uniform smoothness under Lipschitz isomorphisms and coarse Lipschitz equivalences. First we recall the definitions of the relevant moduli and their relationships. The stability of the existence of an equivalent asymptotically uniformly smooth norm was proved in \cite{GodefroyKaltonLancien2001} in the separable case. We take in Theorem \ref{Lipschitz-equivalence} the opportunity to detail its proof in the non separable case that we have not found in the literature. In Theorem \ref{CLequivalence} we detail a precise quantitative version of the stability of asymptotically uniformly smooth renormings under coarse Lipschitz equivalences, again in the general case. This result was mentioned in \cite{GodefroyLancienZizler2014} with only a very brief outline of the proof. Moreover the details of this proof will be used in our last section.

\smallskip
Finally, in section \ref{nacl} (Theorem \ref{NA}), we extend Godefroy's result to our setting of norm attaining coarse Lipschitz equivalences and we give examples of situations when it can be properly stated that the set of norm attaining coarse Lipschitz maps between two Banach spaces $X$ and $Y$ is not dense in the quotient space $CL(X,Y)$.

\section{Norm attaining coarse Lipschitz maps}\label{clm}

\begin{Def} Let $(M,d)$ and $(N,\delta)$ be two metric
spaces and a map $f:M\to N$. If $(M,d)$ is unbounded, we define
$$\forall s>0,\ \ Lip_s(f)=\sup\Big\{\frac{\delta((f(x),f(y))}{d(x,y)},\ d(x,y)\ge s\Big\}\ \
{\rm and}\ \ Lip_\infty(f)=\inf_{s>0}Lip_s(f).$$
Then $f$ is said to be {\it coarse Lipschitz} if
$Lip_\infty(f)<\infty$.\\
The set of coarse Lipschitz maps from $M$ to $N$ is denoted $\cal C\cal L(M,N)$.
\end{Def}

The following equivalent formulations are easy to verify.

\begin{Prop}\label{CL} Let $X$ and $Y$ be two Banach spaces and let $f:X\to Y$ be a mapping. Then the following assertions are equivalent.

\smallskip (i)  The map $f$ is coarse Lipschitz.

\smallskip (ii) There exist $A$ and $\theta$ in $[0,+\infty)$ such that
$$\forall x,x'\in X \ \ \|x-x'\|\ge \theta \Rightarrow \|f(x)-f(x')\|\le A\|x-x'\|.$$

\smallskip (iii) There exist $A$ and $B$ in $[0,+\infty)$ such that
$$\forall x,x'\in X \ \ \|f(x)-f(x')\|\le A\|x-x'\|+B.$$
\end{Prop}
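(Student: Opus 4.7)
The plan is to establish the cycle of implications $(\mathrm{i})\Rightarrow(\mathrm{ii})\Rightarrow(\mathrm{iii})\Rightarrow(\mathrm{i})$, with only the middle step requiring any real work.

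The first and last implications are essentially unwindings of the definition. For $(\mathrm{i})\Rightarrow(\mathrm{ii})$: since $Lip_\infty(f)=\inf_{s>0}Lip_s(f)<\infty$, one may pick $s_0>0$ with $Lip_{s_0}(f)<\infty$ and then take $\theta=s_0$ and $A=Lip_{s_0}(f)$. For $(\mathrm{iii})\Rightarrow(\mathrm{i})$: the inequality $\delta(f(x),f(y))\le A\,d(x,y)+B$ immediately yields, for every $s>0$ and every pair with $d(x,y)\ge s$, that $\delta(f(x),f(y))/d(x,y)\le A+B/s$, so $Lip_s(f)\le A+B/s$ and hence $Lip_\infty(f)\le A<\infty$.

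The one implication carrying content is $(\mathrm{ii})\Rightarrow(\mathrm{iii})$. Given $A,\theta$ from $(\mathrm{ii})$, pairs $(x,y)$ with $d(x,y)\ge\theta$ already satisfy $\delta(f(x),f(y))\le A\,d(x,y)$, so these pose no difficulty. The issue is pairs with $d(x,y)<\theta$: the strategy is to introduce an auxiliary ``detour'' point $w$ placed at distance roughly $2\theta$ from $x$. Then $d(x,w)\ge\theta$ by construction and $d(y,w)\ge d(x,w)-d(x,y)\ge\theta$, so $(\mathrm{ii})$ applies to both $(x,w)$ and $(y,w)$. The triangle inequality in $(N,\delta)$ then gives
$$\delta(f(x),f(y))\le\delta(f(x),f(w))+\delta(f(w),f(y))\le A\,d(x,w)+A\,d(y,w)\le 5A\theta,$$
and taking $B=5A\theta$ produces $(\mathrm{iii})$ with the same constant $A$.

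The only potentially delicate point, and the one I expect to be the main obstacle in a fully general metric space, is the availability of the detour point $w$ at controlled distance from $x$. In the Banach space setting that is the true context of the paper, every sphere $\{w:d(x,w)=2\theta\}$ is non-empty, so such a $w$ is produced trivially; this is presumably why the authors characterize the verification as easy. In a general unbounded metric space one can still find a $w$ with $d(x,w)\ge 2\theta$, but without a control on how large $d(x,w)$ is, the bound $2A\,d(x,w)+A\,d(x,y)$ need not translate into a uniform constant $B$. For all applications in this paper the Banach space hypothesis is in force, so the argument above suffices.
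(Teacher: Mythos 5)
The paper offers no proof here --- it merely asserts that the equivalences ``are easy to verify'' --- so there is nothing to compare against directly; but your argument is correct, and your caveat about the detour point is not a mere scruple: it identifies a genuine overstatement in the proposition as written. The implications $(\mathrm{i})\Leftrightarrow(\mathrm{ii})$ and $(\mathrm{iii})\Rightarrow(\mathrm{i})$ are, as you say, immediate from the definition of $Lip_s$ and $Lip_\infty$. For $(\mathrm{ii})\Rightarrow(\mathrm{iii})$, your detour argument (choose $w$ with $d(x,w)\approx 2\theta$, so both $d(x,w)\ge\theta$ and $d(y,w)\ge\theta$, and estimate $\delta(f(x),f(y))\le 5A\theta$) is the standard proof, and the constant check is right. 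But in a general unbounded metric space there may be no point at distance comparable to $2\theta$ from $x$, and in fact $(\mathrm{ii})\not\Rightarrow(\mathrm{iii})$ in that generality: take $M=\{(4^n,0),(4^n,1):n\ge 1\}\subset(\mathbb R^2,\|\cdot\|_1)$ and $f:M\to\mathbb R$ with $f(4^n,0)=0$ and $f(4^n,1)=n$. Then any pair at distance $\ge 2$ satisfies the Lipschitz bound of $(\mathrm{ii})$, so $Lip_\infty(f)<\infty$, yet $(\mathrm{iii})$ fails since $|f(4^n,0)-f(4^n,1)|=n$ while $d=1$. The implication does hold under a mild connectivity hypothesis --- for instance if $M$ is metrically convex, or large-scale geodesic, or a net in a Banach space --- and every instance where the paper actually invokes $(\mathrm{iii})$ is of this type, so nothing downstream is affected; still, the proposition as literally stated for arbitrary unbounded $(M,d)$ is false, and you were right to be uneasy about it.
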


Note that in the above statement, $Lip_\infty(f)$ coincide with the infimum of all $A\ge 0$ such that (ii) is satisfied for some $\theta\ge 0$ and also with the infimum of all $A\ge0$ such that (iii) is satisfied for some $B\ge 0$.

\medskip Suppose now that $(M,d)$ is an unbounded metric space and $(Y,\|\ \|_Y)$ is a Banach space. Then it is easy to see that $\cal C\cal L(M,Y)$ is a vector space on which $Lip_\infty$ is a semi-norm that we shall also denote $\|\ \|_{CL(M,Y)}$ or simply $\|\ \|_{CL}$ if no confusion is possible. Then we denote $\cal N(M,Y)=\{f\in \cal C\cal L(M,Y),\ Lip_\infty(f)=0\}$ and $CL(M,Y)$ the quotient space $\cal C\cal L(M,Y)/\cal N(M,Y)$. The semi-norm $Lip_\infty$ induces a norm on $CL(M,Y)$ that will also be denoted $Lip_\infty$, $\|\ \|_{CL(M,Y)}$ or $\|\ \|_{CL}$. We shall try to avoid as much as possible the confusion between elements of $\cal C\cal L(M,Y)$ and elements of $CL(M,Y)$.

\medskip We now introduce the notion of norm attaining coarse Lipschitz maps.
\begin{Def} Let $(M,d)$ be an unbounded metric space and $(Y,\|\ \|_Y)$ a Banach space. Assume that $f:M \to Y$ is coarse Lipschitz. We say that $f$ attains its norm in the direction $y\in S_Y$ if there exists a sequence of pairs of distinct points $(s_n,t_n)$ in $M$ such that
$$\lim_{n\to \infty}d(s_n,t_n)=+\infty\ \ {\rm and}\ \ \lim_{n\to \infty}\frac {f(t_n)-f(s_n)}{d(s_n,t_n)}=y\,Lip_\infty(f).$$
\end{Def}

\noindent {\bf Remark.} Note that the above definition is only interesting when $Lip_\infty(f)\neq 0$, that is when $f\neq 0$ in the quotient space $CL(M,Y)$.\\
Note also that if $f\in \cal C\cal L(M,Y)$ attains its norm in the direction $y\in S_Y$ and $g:M\to Y$ is such that $Lip_\infty(f-g)=0$, then $g$ also attains its norm in the direction $y$. Therefore, this notion is well defined for an element $f$ of the quotient space $CL(M,Y)$.

\section{Coarse Lipschitz equivalence of metric spaces}\label{cle}

\begin{Def} Let $(M,d)$ and $(N,\delta)$ be two unbounded metric spaces and $f:M\to N$ be a coarse Lipschitz map. We say that $f$ is a {\it coarse Lipschitz equivalence} from $M$ to $N$, if there exists a coarse Lipschitz map $g:N\to M$ and a constant $C\ge 0$ such that
$$\forall x\in M\ \ d\big((g\circ f)(x),x\big)\le C\ \ {\rm and} \ \ \forall y\in N\ \ \delta\big((f\circ g)(y),y\big)\le C.$$
We denote $\cal C \cal L \cal E(M,N)$ the set of coarse Lipschitz equivalences from $M$ to $N$. If $\cal C \cal L \cal E(M,N)$ is non empty, we say that $M$ and $N$ are {\it coarse Lipschitz equivalent} and denote $M \buildrel {CL}\over {\sim} N$.
\end{Def}

This notion of coarse Lipschitz equivalent metric spaces is exactly the same as the notion of quasi-isometric metric spaces introduced by Gromov in \cite{Gromov1987} (see also the book \cite{GhysDelaHarpe} by E. Ghys and P. de la Harpe).

\medskip

\noindent{\bf Remark.} It is easy to check, for instance using the characterization (iii) in Proposition \ref{CL}, that $\buildrel {CL}\over {\sim}$ is an equivalence relation between Banach spaces.

\medskip We now turn to the notion of net in a metric space.

\begin{Def} Let $0<a\le b$. An $(a,b)$-net in the metric space $(M,d)$ is a subset $\cal M$ of $M$ such that for every $z\neq z'$ in $\cal M$,  $d(z,z')\ge a$ and for
every $x$ in $M$, $d(x,\cal M)< b$.\\
Then a subset $\cal M$ of $M$ is a {\it net} in $M$ if it is an $(a,b)$-net for some $0<a\le b$.
\end{Def}

Let us now give two technical equivalent formulations of the notion of coarse equivalence between Banach spaces, that we shall use later. The main result, which is the fact that (ii) implies (iii) is essentially contained in the proof of Theorem 3.8 in \cite{GodefroyLancienZizler2014}.

\begin{Prop}\label{CLE} Let $X$ and $Y$ be two Banach spaces and let $f:X\to Y$ be a coarse Lipschitz map. The following assertions are equivalent.

\smallskip (i) The map $f$ belongs to $\cal C \cal L \cal E(X,Y)$

\smallskip (ii) There exist $A_0>0$ and $K\ge 1$ such that for all $A\ge A_0$ and all maximal $A$-separated subset $\cal M$ of $X$, $\cal N=f(\cal M)$ is a net in $Y$ and
$$\forall x,x'\in \cal M\ \ \ \frac{1}{K}\|x-x'\|\le \|f(x)-f(x')\|\le K\|x-x'\|.$$

\smallskip (iii) There exist two \underline{continuous} coarse Lipschitz maps $\varphi:X\to Y$ and $\psi:Y\to X$ and a constant $C\ge 0$ such that $\|\varphi(x)-f(x)\|\le C$ for all $x$ in $X$ and
$$\forall x\in X\ \ \|(\psi\circ \varphi)(x)-x\|\le C\ \ {\rm and} \ \ \forall y\in Y\ \ \|(\varphi\circ \psi)(y)-y\|\le C.$$

\end{Prop}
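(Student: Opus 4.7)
I would prove the three implications in the order (iii) $\Rightarrow$ (i) $\Rightarrow$ (ii) $\Rightarrow$ (iii), since only the last one carries real content; the other two are bookkeeping. For (iii) $\Rightarrow$ (i), the candidate $g=\psi$ works: the uniform bound $\|\varphi-f\|_\infty\le C$ combined with the coarse Lipschitzness of $\varphi$ forces $f$ itself to be coarse Lipschitz, and each of the compositions $g\circ f$ and $f\circ g$ differs from $\psi\circ\varphi$ and $\varphi\circ\psi$ respectively by a quantity controlled by $C$ and $Lip_\infty(\psi)$, so remains within bounded distance of the identity after harmlessly enlarging $C$.

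For (i) $\Rightarrow$ (ii), fix a companion $g$ and use Proposition \ref{CL}(iii) to write $\|f(x)-f(x')\|\le A_f\|x-x'\|+B_f$ and $\|g(y)-g(y')\|\le A_g\|y-y'\|+B_g$. For $x\neq x'$ in $\mathcal{M}$ with $\|x-x'\|\ge A$ the triangle inequality yields
\begin{equation*}
\|x-x'\|\le \|g(f(x))-g(f(x'))\|+2C\le A_g\|f(x)-f(x')\|+B_g+2C.
\end{equation*}
Taking $A_0\ge 2(B_g+2C)$ gives the lower bi-Lipschitz bound $\|f(x)-f(x')\|\ge \|x-x'\|/(2A_g)$, while the upper bound is immediate for $f$. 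The resulting separation of $f(\mathcal{M})$ is automatic, and given any $y\in Y$, the maximality of $\mathcal{M}$ produces $m\in\mathcal{M}$ with $\|g(y)-m\|<A$, so $\|y-f(m)\|\le C+A_fA+B_f$, which is the covering condition.

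The substantive step is (ii) $\Rightarrow$ (iii). I would fix $A\ge A_0$ and a maximal $A$-separated $\mathcal{M}\subset X$, set $\mathcal{N}=f(\mathcal{M})$, and exploit the fact that $f|_\mathcal{M}\colon \mathcal{M}\to\mathcal{N}$ is injective with $K$-bi-Lipschitz inverse $h\colon\mathcal{N}\to\mathcal{M}$. Since metric spaces are paracompact, I can select a locally finite Lipschitz partition of unity $(\lambda_m)_{m\in\mathcal{M}}$ on $X$ subordinate to $\{B(m,A)\}_{m\in\mathcal{M}}$, and analogously $(\mu_n)_{n\in\mathcal{N}}$ on $Y$, and define
\begin{equation*}
\varphi(x)=\sum_{m\in\mathcal{M}}\lambda_m(x)\,f(m),\qquad \psi(y)=\sum_{n\in\mathcal{N}}\mu_n(y)\,h(n).
\end{equation*}
Each sum is locally finite and a convex combination of points of bounded diameter (by the $K$-bi-Lipschitz control), hence defines a continuous map into the ambient Banach space. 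Choosing $m_x\in\mathcal{M}$ with $\|x-m_x\|<A$, every $m$ with $\lambda_m(x)>0$ satisfies $\|m-m_x\|<2A$ and hence $\|f(m)-f(m_x)\|\le 2KA$; combined with $\|f(m_x)-f(x)\|\le A_fA+B_f$, this produces a uniform constant $C_1$ with $\|\varphi(x)-f(x)\|\le C_1$, from which coarse Lipschitzness of $\varphi$ is inherited. A symmetric argument gives the analogous bound $\|\psi(y)-h(n_y)\|\le C_2$ for $n_y$ close to $y$.

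The remaining issue, and where I expect the main obstacle, is the approximate-inverse identities. I would argue that $\varphi(x)$ lies within $C_1$ of $f(m_x)\in\mathcal{N}$, so applying the coarse Lipschitz modulus of $\psi$ puts $\psi(\varphi(x))$ within $Lip_\infty(\psi)\cdot C_1+B_\psi+C_2$ of $h(f(m_x))=m_x$, which is itself within $A$ of $x$; the reverse composition is handled symmetrically. The delicate point is that the resulting constant $C$ depends in a layered way on $A$, $K$, and the coarse Lipschitz moduli of $f$, $\varphi$, and $\psi$, so the choice of $A_0$ and of the partitions of unity must be made consistently. A secondary technical subtlety is that in the non-separable setting $\mathcal{M}$ and $\mathcal{N}$ may well be uncountable, which is precisely why the partition of unity has to be obtained from paracompactness rather than from a countable enumeration.
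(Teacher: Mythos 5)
Your proposal is correct and takes essentially the same approach as the paper: both prove (i)\,$\Rightarrow$\,(ii) by playing the affine coarse Lipschitz estimates for $f$ and its approximate inverse $g$ against the separation $\|x-x'\|\ge A$, and both prove (ii)\,$\Rightarrow$\,(iii) by taking partitions of unity subordinate to the ball covers $\{B(m,A)\}_{m\in\mathcal M}$ and $\{B(n,b)\}_{n\in\mathcal N}$ and setting $\varphi=\sum\lambda_m f(m)$, $\psi=\sum\mu_n h(n)$, the only cosmetic difference being the order in which the cycle of implications is traversed and the specific constants chosen. One small caveat: paracompactness only supplies a \emph{continuous} locally finite partition of unity, not a Lipschitz one; this is harmless here since neither the paper's argument nor yours uses the Lipschitz condition on the $\lambda_m,\mu_n$ (continuity suffices to get continuity of $\varphi,\psi$, and their coarse Lipschitzness comes from the bounded-distance estimates to $f$ and $h$, not from regularity of the partition).
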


\begin{proof} $(i)\Rightarrow (ii)$. Assume that there exist $g:Y\to X$ and constants $C,D,M>0$ such that
$$\forall x\in X\ \ \|(g\circ f)(x)-x\|\le C,\ \ \ \ \forall y\in Y\ \ \|(f\circ g)(y)-y\|\le C.$$
and
$$\forall x,x'\in X\ \ \|f(x)-f(x')\|\le D+M\|x-x'\|,$$
$$\forall y,y'\in Y\ \ \|g(y)-g(y')\|\le D+M\|y-y'\|.$$
Let $A_0= (2C+D)(M+1)$, $A\ge A_0$ and $\cal M$ be a maximal $A$-separated subset of $X$. Note that $\cal M$ is a $(A,A)$-net of $X$. Let now $x\neq x' \in \cal M$, $y=f(x)$ and $y'=f(x')$. Then $$\|f(x)-f(x')\|\le D+M\|x-x'\|\le A+M\|x-x'\|\le (M+1)\|x-x'\|.$$ On the other hand $\|g(y)-x\|\le C$ and $\|g(y')-x'\|\le C$, which implies that \\
$\|g(y)-g(y')\|\ge \|x-x'\|-2C$ and therefore
$$\|x-x'\|\le 2C+D+M\|y-y'\|\le \frac{A}{M+1}+M\|y-y'\|\le \frac{\|x-x'\|}{M+1}+M\|y-y'\|.$$
It follows that $\|x-x'\|\le (M+1)\,\|y-y'\|$. So $f$ is a Lipschitz isomorphism from $\cal M$ onto $\cal N=f(\cal M)$ and $K=M+1$ satisfies the required inequalities. In particular $\cal N$ is $a$-separated, with $a=A(M+1)^{-1}$.\\ Finally let $z\in Y$. There exists $x\in \cal M$ such that $\|x-g(z)\|\le A$. Then $$\|f(x)-z\|\le \|f(x)-f(g(z))\|+C\le D+MA+C=b.$$
We have shown that $\cal N$ is an $(a,b)$-net in $Y$, which finishes the proof of this implication.

\medskip
$(ii)\Rightarrow (iii)$ For $A\ge A_0$, we pick $(x_i)_{i\in I}$ a maximal $A$-separated subset of $X$. Note that $(x_i)_{i\in I}$ is an $(A,A)$-net in $X$. For $i\in I$, let $y_i=f(x_i)$. Then, by assumption, $(y_i)_{i\in I}$ is an $(a,b)$-net in $Y$, for some $0<a\le b$, and we have
$$\forall i,j\in I\ \ \ \frac1K \|x_i-x_j\|\le \|y_i-y_j\| \le K\,\|x_i-x_j\|.$$
Then we can find  a continuous partition of unity $(f_i)_{i\in I}$ subordinated to the open cover $(B_X(x_i,A))_{i\in I}$ of $X$ and a continuous partition of unity $(g_i)_{i\in I}$ subordinated to the open cover $(B_Y(y_i,b))_{i\in I}$ of $Y$ and we set
$$\forall x\in X\ \ \varphi(x)=\sum_{i\in I}f_i(x)\,y_i\ \ {\rm and}\ \ \forall y \in Y\ \ \psi(y)=\sum_{i\in I}g_i(y)\,x_i.$$
Note first that $\varphi$ and $\psi$ are continuous.\\
Let $x\in X$ and pick $i\in I$ such that $\|x-x_i\|\le A$. Now, if $f_j(x)\neq 0$, then $\|x-x_j\|\le A$ and $\|x_i-x_j\|\le 2A$. It follows that
$$\|\varphi(x)-y_i\|=\|\sum_{j,f_j(x)\neq 0} f_j(x)\,(y_j-y_i)\|\le 2AK.$$
Let now $x'\in X$ and $j\in I$ so that $\|x'-x_j\|\le A$. Then we have
$$\|\varphi(x)-\varphi(x')\|\le 4AK+\|y_i-y_j\|\le 4AK+K\|x_i-x_j\|\le 6AK+K\|x-x'\|.$$
This shows that $\varphi$ is coarse Lipschitz and $Lip_\infty(\varphi)\le K$ and a similar proof yields that the same is true for $\psi$.\\
For $x\in X$, pick again $i\in I$ such that $\|x-x_i\|\le A$. If $g_j(\varphi(x))\neq 0$, then $\|\varphi(x)-y_j\|\le b$ and $\|y_i-y_j\|\le \|\varphi(x)-y_i\|+\|\varphi(x)-y_j\|\le 2AK+b$. Therefore
$$\|\psi(\varphi(x))-x_i\|=\big\|\sum_{j,g_j(\varphi(x))\neq 0} g_j(\varphi(x))\,(x_j-x_i)\big\|\le K(2AK+b).$$
Finally, we get that
$$\|\psi(\varphi(x))-x\|\le \|\psi(\varphi(x))-x_i\|+\|x-x_i\|\le K(2AK+b)+A=C_1.$$
Similarly, we get that there exists $C_2\ge 0$ such that for all $y\in Y$, $\|\varphi(\psi(y))-y\|\le C_2$.\\
Finally, recall that $f$ is coarse Lipschitz. So, there exist $D,E\ge 0$ such that for all $x,x'\in X$, $\|f(x)-f(x')\|\le D\|x-x'\|+E$. Since
$$\forall x\in X\ \ \ \varphi(x)-f(x)=\sum_{j,f_j(x)\neq 0} f_j(x)\,(f(x_j)-f(x)),$$
and $\|x_j-x\|\le A$, whenever $f_j(x)\neq 0$, we obtain that
$$\forall x\in X,\ \ \|\varphi(x)-f(x)\|\le DA+E=C_3.$$
We conclude the proof of this implication by taking $C=\max\{C_1,C_2,C_3\}$.

\medskip $(iii)\Rightarrow (i)$ is clear.

\end{proof}

\noindent{\bf Remark.} The main information of Proposition \ref{CLE} is that for any $f$ in $\cal C \cal L \cal E(X,Y)$, there exists $\varphi$ which is a continuous representative of the equivalence class of $f$ in $CL(X,Y)$ and also a coarse Lipschitz equivalence with a continuous ``coarse Lipschitz inverse'' $\psi$. This will be crucial when we shall apply the Gorelik principle whose proof is based on Brouwer's fixed point theorem.

\medskip

Let us notice that, using for instance the characterization (ii) of Proposition \ref{CLE}, the following is immediate.

\begin{Cor}\label{homogeneity} Let $X$, $Y$ be two Banach spaces and $f\in \cal C \cal L \cal E(X,Y)$. Then, for any $\lambda \neq 0$, $\lambda f\in \cal C \cal L \cal E(X,Y)$.
\end{Cor}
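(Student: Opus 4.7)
The plan is to invoke characterization (ii) of Proposition \ref{CLE}, which reduces the problem to verifying a bilipschitz condition on a maximal separated net, together with the net-image condition. Since multiplication by a nonzero scalar is a homeomorphism of $Y$ that simply rescales all distances, both conditions should survive essentially for free.

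First I would fix $\lambda\neq 0$ and use $(i)\Rightarrow (ii)$ of Proposition \ref{CLE} applied to $f$ to obtain constants $A_0>0$ and $K\ge 1$ such that for every $A\ge A_0$ and every maximal $A$-separated subset $\cal M\subset X$, the image $\cal N=f(\cal M)$ is an $(a,b)$-net in $Y$ (for some $0<a\le b$) and $K^{-1}\|x-x'\|\le \|f(x)-f(x')\|\le K\|x-x'\|$ for all $x,x'\in\cal M$. Then for the same $\cal M$, the image under $\lambda f$ is exactly $\lambda\cal N$. Because $y\mapsto \lambda y$ is a bijection on $Y$ that multiplies distances by $|\lambda|$, the set $\lambda\cal N$ is a $(|\lambda|a,|\lambda|b)$-net in $Y$. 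Multiplying the bilipschitz inequalities by $|\lambda|$ gives
$$\frac{|\lambda|}{K}\|x-x'\|\le \|\lambda f(x)-\lambda f(x')\|\le K|\lambda|\,\|x-x'\|,$$
so setting $K'=K\max(|\lambda|,|\lambda|^{-1})\ge 1$ we obtain $(K')^{-1}\|x-x'\|\le \|\lambda f(x)-\lambda f(x')\|\le K'\|x-x'\|$ on $\cal M$.

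Since this holds for the same threshold $A_0$ and every maximal $A$-separated $\cal M$ with $A\ge A_0$, the map $\lambda f$ satisfies condition (ii) of Proposition \ref{CLE} with constants $A_0$ and $K'$. Applying the implication $(ii)\Rightarrow (i)$ concludes that $\lambda f\in\cal{CLE}(X,Y)$.

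There is no serious obstacle: the argument is pure bookkeeping, and the only point worth noting is that replacing $K$ by $K'=K\max(|\lambda|,|\lambda|^{-1})$ ensures $K'\ge 1$, which is required by the formulation of (ii). The case $\lambda=0$ is of course excluded, since the zero map collapses all images to a single point and therefore fails the net condition.
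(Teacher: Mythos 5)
Your argument is correct and matches the paper's intended proof, which simply points to characterization (ii) of Proposition \ref{CLE} and leaves the scaling bookkeeping to the reader. Your replacement $K' = K\max(|\lambda|,|\lambda|^{-1})$ and the observation that $\lambda\mathcal{N}$ is a $(|\lambda|a,|\lambda|b)$-net are exactly the details one needs to fill in.
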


\section{On the completeness of CL(X,Y)}\label{completeness}

\begin{Def} Let $X$ and $Y$ be two Banach spaces and $\cal M$ be a net in $X$. We say that  $(\cal M,X,Y)$ has the {\it Lipschitz extension property} if any Lipschitz function from $\cal M$ to $Y$ admits a Lipschitz extension from $X$ to $Y$. We say that the pair $(X,Y)$ has the {\it net extension property} (in short NEP) if there exists a net $\cal M$ in $X$ such that $(\cal M,X,Y)$ has the Lipschitz extension property
\end{Def}

\begin{Lem}\label{uniform} Assume that $X$ and $Y$ are Banach spaces and $\cal M$ is a net in $X$ such that $(\cal M, X,Y)$ has the Lipschitz extension property, then there exists $\lambda\ge 1$ such that any Lipschitz function $f:\cal M \to Y$ admits an extension $g:X\to Y$ with $Lip\,(g)\le \lambda Lip\,(f)$.
\end{Lem}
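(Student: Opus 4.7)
The plan is to deduce this uniform extension constant from the open mapping theorem applied to the restriction operator on appropriate Banach spaces of Lipschitz functions.

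First, I would fix a basepoint $x_0 \in \mathcal{M}$ and introduce the vector spaces
\[
\text{Lip}_0(\mathcal{M}, Y) = \{f : \mathcal{M} \to Y \text{ Lipschitz}, \ f(x_0) = 0\}
\]
and
\[
\text{Lip}_0(X, Y) = \{g : X \to Y \text{ Lipschitz}, \ g(x_0) = 0\},
\]
each equipped with the Lipschitz constant as its norm. Both of these are classically Banach spaces: given a Cauchy sequence $(f_n)$, the estimate $\|f_n(x) - f_m(x)\|_Y \le \text{Lip}(f_n - f_m)\, d(x, x_0)$ shows pointwise convergence to some $f$ vanishing at $x_0$, and a standard two-variable inequality then gives $\text{Lip}(f_n - f) \to 0$.

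Next, I would consider the restriction operator
\[
R : \text{Lip}_0(X, Y) \to \text{Lip}_0(\mathcal{M}, Y), \qquad R(g) = g|_{\mathcal{M}}.
\]
It is clearly linear with $\|R\| \le 1$. Surjectivity is exactly the Lipschitz extension hypothesis: given $f \in \text{Lip}_0(\mathcal{M}, Y)$, the hypothesis furnishes a Lipschitz extension $\tilde g : X \to Y$; since $\tilde g(x_0) = f(x_0) = 0$, one has $\tilde g \in \text{Lip}_0(X, Y)$ with $R(\tilde g) = f$.

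Now I would invoke the open mapping theorem: a surjective bounded linear operator between Banach spaces is open, so there exists $\lambda \ge 1$ such that every $f \in \text{Lip}_0(\mathcal{M}, Y)$ admits a preimage $g \in \text{Lip}_0(X, Y)$ with $\text{Lip}(g) \le \lambda\, \text{Lip}(f)$. To handle a general Lipschitz $f : \mathcal{M} \to Y$ (not necessarily vanishing at $x_0$), I would apply this to $f - f(x_0) \in \text{Lip}_0(\mathcal{M}, Y)$, obtain an extension $g_0 \in \text{Lip}_0(X, Y)$ with $\text{Lip}(g_0) \le \lambda\, \text{Lip}(f)$, and set $g = g_0 + f(x_0)$, which is a Lipschitz extension of $f$ with the same Lipschitz constant as $g_0$.

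There is no real obstacle here; the only point requiring care is checking completeness of the $\text{Lip}_0$ spaces (routine) and verifying that the extension hypothesis translates into surjectivity of $R$ on the base-point-zero subspaces, which is automatic since Lipschitz extension preserves values on $\mathcal{M}$.
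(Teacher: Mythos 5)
Your proof is correct and follows exactly the route the paper takes: normalize to the base-point-zero Lipschitz spaces, observe that the restriction operator is a bounded surjection between Banach spaces, and extract the uniform constant from the open mapping theorem. You simply spell out the routine verifications (completeness of $\mathrm{Lip}_0$, surjectivity of $R$, and the translation trick for a general $f$) that the paper leaves implicit.
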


\begin{proof} We may and do assume that $0\in \cal M$ and $f(0)=0$. Then the conclusion follows from a straightforward application of the open mapping theorem to the restriction operator to $\cal M$ defined from $Lip_0(X,Y)$ onto $Lip_0(\cal M,Y)$, where $Lip_0(X,Y)$ is the Banach space of all Lipschitz functions from $X$ to $Y$ that vanish at 0 equipped with the norm $\|f\|_L=Lip\,(f)$.
\end{proof}

\begin{Def} Let $X$ and $Y$ be two Banach spaces and let $\mu\ge 1$. We say that  $(X,Y)$ has the {\it $\mu$-Lipschitz representation property} if (in short $\mu$-LRP) if for any $f\in \cal C \cal L(X,Y)$ and any $c>Lip_\infty(f)$, there exists $g\in Lip_0(X,Y)$ so that $Lip\,(g)< \mu c$ and $f-g$ is bounded.
\end{Def}

\begin{Prop}\label{lipschitzrep} Assume that $X$ and $Y$ are Banach spaces such that $(X,Y)$ has the net extension property. Then there exists $\mu\ge 1$ such that $(X,Y)$ has the $\mu$-LRP.
\end{Prop}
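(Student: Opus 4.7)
The plan is to combine the net extension property with the quantitative Lemma \ref{uniform} and a simple scaling argument on the net. By hypothesis there is a net $\mathcal{M}$ in $X$, say an $(a,b)$-net, such that $(\mathcal{M},X,Y)$ has the Lipschitz extension property. Translating if necessary, we may assume $0\in\mathcal{M}$. Lemma \ref{uniform} then yields a constant $\lambda\ge 1$ such that every Lipschitz map $h:\mathcal{M}\to Y$ with $h(0)=0$ admits an extension $\tilde h:X\to Y$ with $Lip(\tilde h)\le \lambda Lip(h)$. I would claim that $\mu:=\lambda$ witnesses the $\mu$-LRP.

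A preliminary observation is that this extension property is dilation-invariant: for every $s>0$, the dilated net $\mathcal{M}_s:=s\mathcal{M}$ is an $(sa,sb)$-net and inherits the same Lipschitz extension constant $\lambda$. Indeed, given $\phi:\mathcal{M}_s\to Y$ which is $L$-Lipschitz and vanishes at $0$, the map $u\mapsto \phi(su)$ on $\mathcal{M}$ is $sL$-Lipschitz and vanishes at $0$; extending it to $X$ with constant at most $\lambda sL$ and then precomposing with $x\mapsto x/s$ gives an extension of $\phi$ to $X$ of Lipschitz constant at most $\lambda L$.

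Now fix $f\in\mathcal{CL}(X,Y)$ and $c>Lip_\infty(f)$. Replacing $f$ by $f-f(0)$ only modifies it by a constant, which alters neither $Lip_\infty(f)$ nor the eventual boundedness of $f-g$, so we may assume $f(0)=0$. Pick $c'\in(Lip_\infty(f),c)$; by Proposition \ref{CL}(ii) there exists $\theta\ge 0$ with $\|f(x)-f(y)\|\le c'\|x-y\|$ whenever $\|x-y\|\ge \theta$. Choose $s>0$ so large that $sa\ge\theta$. Then the restriction $h:=f_{|\mathcal{M}_s}$ is genuinely $c'$-Lipschitz on $\mathcal{M}_s$ (since any two distinct points of $\mathcal{M}_s$ are at distance $\ge sa\ge\theta$) and $h(0)=0$, so by the previous paragraph it extends to $g\in Lip_0(X,Y)$ with $Lip(g)\le \lambda c'<\lambda c=\mu c$.

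It remains to verify that $f-g$ is bounded, and for this we invoke Proposition \ref{CL}(iii) to write $\|f(x)-f(y)\|\le A\|x-y\|+B$ for some $A,B\ge 0$ and all $x,y\in X$. For any $x\in X$, pick $z\in\mathcal{M}_s$ with $\|x-z\|\le sb$; since $g(z)=h(z)=f(z)$,
$$\|f(x)-g(x)\|\le \|f(x)-f(z)\|+\|g(z)-g(x)\|\le (A+\lambda c')sb+B,$$
which is independent of $x$. The only real obstacle in this argument is arranging that the restriction of a purely coarse Lipschitz $f$ to a net becomes genuinely Lipschitz with constant arbitrarily close to $Lip_\infty(f)$; the dilation step in the second paragraph resolves exactly this issue, by allowing us to enlarge the minimal separation of the net past the threshold $\theta$ without losing the extension constant $\lambda$.
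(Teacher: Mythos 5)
Your proof is correct and follows essentially the same route as the paper: pick a net $\mathcal{M}$ with extension constant $\lambda$ from Lemma \ref{uniform}, dilate it so the restriction of $f$ becomes genuinely Lipschitz, extend, and observe that $f-g$ is bounded since both are coarse Lipschitz and agree on a net. The only small refinement you add is the intermediate constant $c'\in(Lip_\infty(f),c)$, which cleanly gives the strict inequality $Lip(g)<\mu c$ that the definition of $\mu$-LRP requires; otherwise your write-up simply makes explicit the ``easy change of variable argument'' and the boundedness step that the paper leaves as remarks.
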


\begin{proof} Let $f\in \cal C \cal L(X,Y)$ such that $Lip_\infty(f)<c$. Pick $\cal M$ be a net in $X$ and $\lambda \ge 1$ such that the conclusion of Lemma \ref{uniform} is satisfied. It follows from an easy change of variable argument that for any $A\ge 1$, the net $A\cal M$ also satisfies  the conclusion of Lemma \ref{uniform} with the same constant $\lambda$. Then for $A$ large enough, the restriction of $f$ to $A\cal M$ is $c$-Lipschitz. So it admits an extension $g:X\to Y$ such that $g$ is $\lambda c$-Lipschitz. Since $f$ and $g$ are both coarse Lipschitz and coincide on a net, it is not difficult to see that $f-g$ is bounded. By adding a constant to $g$, we may also assume that $g(0)=0$, which concludes the proof.
\end{proof}

\noindent {\bf Remark.} We do not know if the converse of this last proposition is true. However, it is not difficult to check that the existence of $\mu\ge 1$ such that $(X,Y)$ has the $\mu$-LRP is equivalent to the existence of $\lambda\ge 1$ such that $(X,Y)$ has the $\lambda$-ANEP. Here, $\lambda$-ANEP stands for $\lambda$-almost net extension property, which is formally weaker than NEP and has the following ad'hoc meaning: there exists a net $\cal M$ in $X$ such that for any Lipschitz function $f:\cal M \to Y$, there exists $g:X\to Y$ Lipschitz so that $Lip\,(g)\le \lambda Lip\,(f)$ and $f-g$ is bounded on $\cal M$.

\begin{Prop}\label{complete} Assume that $X$ and $Y$ are Banach spaces such that $(X,Y)$ has the $\mu$-LRP for some $\mu\ge 1$. Then $(CL(X,Y),\|\ \|_{CL})$ is a Banach space
\end{Prop}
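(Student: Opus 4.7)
The plan is to prove completeness by the standard trick of extracting a fast Cauchy subsequence and converting the problem into absolute convergence in the genuine Banach space $Lip_0(X,Y)$.

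First, take a Cauchy sequence $([f_n])$ in $CL(X,Y)$ and extract a subsequence $(f_{n_k})$ of representatives with $\|f_{n_{k+1}}-f_{n_k}\|_{CL}<2^{-k-1}$ for all $k\ge 1$. Set $h_k=f_{n_{k+1}}-f_{n_k}$, so $h_k\in\cal C\cal L(X,Y)$ with $Lip_\infty(h_k)<2^{-k-1}$. Applying the $\mu$-LRP hypothesis to $h_k$ with $c=2^{-k}>Lip_\infty(h_k)$, we obtain $g_k\in Lip_0(X,Y)$ such that $Lip(g_k)<\mu 2^{-k}$ and $h_k-g_k$ is bounded; in particular $Lip_\infty(h_k-g_k)=0$, i.e.\ $[h_k]=[g_k]$ in $CL(X,Y)$.

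Next I would use completeness of $Lip_0(X,Y)$ (which is a classical Banach space under $\|\cdot\|_L=Lip(\cdot)$). Since $\sum_{k\ge 1}Lip(g_k)<\mu\sum_{k\ge 1}2^{-k}<\infty$, the series $\sum g_k$ converges absolutely in $Lip_0(X,Y)$ to some $g\in Lip_0(X,Y)$. Telescoping, for every $N\ge 1$,
\[
[f_{n_{N+1}}]-[f_{n_1}]=\sum_{k=1}^{N}[h_k]=\sum_{k=1}^{N}[g_k]=\Big[\sum_{k=1}^{N}g_k\Big],
\]
and because $Lip_\infty(\cdot)\le Lip(\cdot)$ on Lipschitz maps, we have
\[
\Big\|[f_{n_{N+1}}]-[f_{n_1}]-[g]\Big\|_{CL}\le Lip\Big(g-\sum_{k=1}^{N}g_k\Big)\xrightarrow[N\to\infty]{}0.
\]
Hence the subsequence $([f_{n_k}])$ converges in $CL(X,Y)$ to $[f_{n_1}+g]$.

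Finally, a Cauchy sequence in a semi-metric setting that admits a convergent subsequence converges itself to the same limit, so $([f_n])$ converges in $CL(X,Y)$ and the quotient space is complete. The only mild obstacle is that the individual corrections $h_k-g_k$ are merely bounded, not summable, so one cannot work directly in $\cal C\cal L(X,Y)$; the passage to the quotient and the use of $Lip_0(X,Y)$ for absolute summability is exactly what resolves this.
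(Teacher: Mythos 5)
Your proof is correct and takes essentially the same approach as the paper. The paper applies the absolutely convergent series criterion for completeness directly rather than first extracting a fast Cauchy subsequence, but both rely on the same key steps: using the $\mu$-LRP to pick representatives in $Lip_0(X,Y)$ with summable Lipschitz constants, and then invoking completeness of $Lip_0(X,Y)$ together with $Lip_\infty(\cdot)\le Lip(\cdot)$ to transfer the limit back to $CL(X,Y)$.
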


\begin{proof} Let $(f_n)_{n=1}^\infty$ be a sequence in $\cal C \cal L(X,Y)$ such that $\sum_{n=1}^\infty \|f_n\|_{CL}<\infty$. Then for any $n$ in $\Ndb$, there exists $g_n\in Lip_0(X,Y)$ such that $g_n$ belongs to the equivalence class of $f_n$ and $Lip\,(g_n)\le \mu \|f_n\|_{CL}+2^{-n}$. Then using the completeness of $(Lip_0(X,Y),\|\ \|_L)$ we get that there exists $g\in Lip_0(X,Y)$ such that
$$\lim_{N\to \infty}\|g-\sum_{n=1}^N g_n\|_L=0.$$ It follows that $\lim_{N\to \infty}\|g-\sum_{n=1}^N f_n\|_{CL}=0$, which concludes our proof.
\end{proof}

\noindent {\bf Remark.} We do not know if $(CL(X,Y),\|\ \|_{CL})$ is a Banach space without any assumption on the Banach spaces $X$ and $Y$. We conjecture that it is not the case, but a counterexample still has to be constructed.

\begin{Prop}\label{open} Assume that $X$ and $Y$ are Banach spaces such that $(X,Y)$ and $(Y,X)$ have the $\mu$-LRP for some $\mu\ge 1$. Then for any $f\in \cal C \cal L \cal E(X,Y)$, there exists $\eps>0$ such that $f-u\in \cal C \cal L \cal E(X,Y)$, whenever $u:X\to Y$ is such that $Lip_\infty(u)<\eps$.\\
\end{Prop}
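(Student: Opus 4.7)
The plan is to reduce to Lipschitz representatives on both sides and then run a Banach fixed-point argument. Let $f\in\cal C\cal L\cal E(X,Y)$ and pick a coarse Lipschitz inverse $g:Y\to X$. First I would use the $\mu$-LRP of $(X,Y)$ to replace $f$ (up to a bounded perturbation, hence within its $CL$-class) by an $F\in Lip_0(X,Y)$ with $Lip(F)<\mu\,Lip_\infty(f)+1$, and the $\mu$-LRP of $(Y,X)$ to replace $g$ by a $G\in Lip_0(Y,X)$ with $Lip(G)<\mu\,Lip_\infty(g)+1$. Since $F-f$, $G-g$ are bounded and $F,G,f,g$ are all coarse Lipschitz, an easy verification shows that $G\circ F-\id_X$ and $F\circ G-\id_Y$ are still bounded, say by a constant $C$. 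Set $M':=Lip(G)$.

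Next I would choose the threshold $\eps:=1/(2\mu M')$ and let $u:X\to Y$ satisfy $Lip_\infty(u)<\eps$. Applying the $\mu$-LRP of $(X,Y)$ to $u$ yields $U\in Lip_0(X,Y)$ with $Lip(U)<\mu\eps=1/(2M')$ and $U-u$ bounded. Because $f-u$ and $F-U$ differ by a bounded function, it suffices to prove that $F-U\in\cal C\cal L\cal E(X,Y)$; indeed, coarse Lipschitz equivalence is preserved by bounded perturbations (any Lipschitz ``inverse'' of $F-U$ still serves as a coarse Lipschitz inverse for $f-u$).

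To build an inverse, I would, for each fixed $y\in Y$, consider the map $T_y:X\to X$ defined by $T_y(x)=G(y+U(x))$. Then $Lip(T_y)\le M'\,Lip(U)<1/2$, so $T_y$ is a strict contraction on the complete space $X$ and admits a unique fixed point $H(y)$. A direct computation using $\|H(y)-H(y')\|\le M'\|y-y'\|+M'\,Lip(U)\|H(y)-H(y')\|$ shows that $H:Y\to X$ is Lipschitz. From $H(y)=G(y+U(H(y)))$ and $\|F\circ G-\id_Y\|_\infty\le C$ one gets $\|F(H(y))-y-U(H(y))\|\le C$, i.e.\ $\|(F-U)(H(y))-y\|\le C$. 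Conversely, for $x\in X$ with $y=(F-U)(x)$, using $G(y+U(x))=G(F(x))$ and $\|G\circ F-\id_X\|_\infty\le C$ gives $\|H(y)-x\|\le M'\,Lip(U)\|H(y)-x\|+C$, hence $\|H((F-U)(x))-x\|\le 2C$. This exhibits $H$ as a coarse Lipschitz inverse of $F-U$, so $F-U$, and therefore $f-u$, lies in $\cal C\cal L\cal E(X,Y)$.

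The only real obstacle is the contraction step: one needs to verify cleanly that after the two LRP replacements, the composition identities are retained up to bounded errors, and that the constant $\eps$ can be chosen uniformly in $u$ (which is the reason to track $M'=Lip(G)$ rather than $Lip_\infty(g)$). Once those bookkeeping points are in place, Banach's fixed-point theorem delivers $H$ and the rest is a routine quantitative check.
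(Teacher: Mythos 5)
Your proof is correct, and it takes a genuinely different route from the paper's. The paper also passes via the $\mu$-LRP to Lipschitz representatives $\varphi,\psi,v$ of $f,g,u$, but then constructs the coarse inverse of $\varphi-v$ through a Neumann-type series $G=\sum_{n\ge 0}\psi\circ(v\circ\psi)^n$ in $Lip_0(Y,X)$, exploiting that $Lip(v\circ\psi)<1$ and $Lip(\psi\circ v)<1$ make the relevant series absolutely convergent in the Lipschitz norm, and then verifying boundedness of $(\varphi-v)\circ G-Id_Y$ and $G\circ(\varphi-v)-Id_X$ via formal manipulations of those series. You instead solve the equation $(F-U)(x)\approx y$ pointwise by Banach's fixed-point theorem: for each $y$ the map $T_y(x)=G(y+U(x))$ is a strict contraction (since $Lip(G)\,Lip(U)<1/2$), and its fixed point $H(y)$ gives the coarse inverse. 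The two ideas are of course cousins (Neumann series is the linear shadow of contraction iteration), but your version is cleaner in this nonlinear setting: you never need composition to interact with sums, whereas the paper's series-rearrangement identities (e.g.\ rewriting $\sum\psi\circ(v\circ\psi)^n$ as $\psi\circ\sum(v\circ\psi)^n$) are formally identities for linear maps and are used here with nonlinear $\psi$, so they require some care to justify. Your bookkeeping is also right: you correctly track $\eps$ through $M'=Lip(G)$ so it depends only on $f$, $g$ and $\mu$, you verify that $G\circ F-Id_X$ and $F\circ G-Id_Y$ stay bounded after the LRP replacements, and your estimates $\|(F-U)(H(y))-y\|\le C$ and $\|H((F-U)(x))-x\|\le 2C$ (via $1-Lip(T_y)\ge 1/2$) close the argument. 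The reduction from $f-u$ to $F-U$ by bounded perturbations is also sound, since the Lipschitz $H$ transports bounded errors to bounded errors.
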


\begin{proof} Since $f\in \cal C \cal L \cal E(X,Y)$, there exists $C\ge 1$ and $g\in \cal C \cal L \cal E(Y,X)$ so that $Lip_\infty(f)< C$, $Lip_\infty(g)< C$ and
$$\forall x \in X\ \ \|(g\circ f)(x)-x\|\le C\ \ {\rm and}\ \ \forall y\in Y\ \ \|(f\circ g)(y)-y\|\le C.$$
Let us now fix $u\in \cal C \cal L(X,Y)$ such that $Lip_\infty(u)<(\mu^2 C)^{-1}$.\\
It follows from our assumptions that there exist $K>0$, $\varphi \in Lip_0(X,Y)$, $\psi \in Lip_0(Y,X)$ and $v\in Lip_0(X,Y)$ so that $Lip\,(\varphi)<\mu C$, $Lip\,(\psi)<\mu C$, $Lip\,(v)<(\mu C)^{-1}$ and such that $f-\varphi$, $g-\psi$ and $u-v$ are bounded. Note first that it is not difficult to deduce that $\psi\circ \varphi-I_X $ is bounded on $X$ and $\varphi\circ \psi-Id_Y$ is bounded on $Y$. So let $K>0$ be such that $\|f-\varphi\|$, $\|u-v\|$, $\|g-\psi\|$, $\|\psi\circ \varphi-I_X\|$ and $\|\varphi\circ \psi-Id_Y\|$ are bounded by $K$ on their respective domains.

We now exhibit a coarse Lipschitz inverse $G$ of $f-u$ as follows. For $y\in Y$ and $x\in X$, we define $L_y(x)=\psi(y+v(x))$. Since $Lip\,(L_y)<1$, the map $L_y$ admits a unique fixed point in $X$ that we denote $G(y)$, which is thus defined by the equation
\begin{equation}\label{fixed point}
G(y)=\psi\big(y+v\circ G(y))\big).
\end{equation}
Classical elementary manipulations of the above equation yield that $G$ is Lipschitz and more precisely that $Lip\,(G)\le Lip\,(\psi)\big(1-Lip\,(v)Lip\,(\psi)\big)^{-1}$. It remains to show that $(f-u)\circ G -I_Y$ and $G\circ (f-u)-I_X$ are bounded. Since $G$ is Lipschitz, it is enough to show that $(\varphi - v)\circ G -I_Y$ and $G\circ (\varphi-v)-I_X$ are bounded.\\
Let us first fix $y\in Y$. Then using (\ref{fixed point}) we get
\[
 \begin{aligned}
 \|(\varphi-v)\circ G(y)-y\|&=\|\varphi \circ \psi \big(y+v\circ G(y)\big) - v \circ \psi\big(y+v\circ G(y)\big)-y\|\\
 &\le K+\|y+v\circ G(y) -v\circ G(y)-y\|=K.
  \end{aligned}
 \]
Consider $x\in X$. Then
\[
 \begin{aligned}
 \|G \circ(\varphi-v)(x)-x\|&\le\|\psi\big((\varphi-v)(x)+v\circ G\big((\varphi-v)(x)\big)-\psi \circ \varphi(x)\|+K\\
 &\le Lip\,(\psi)\|v\circ G\big((\varphi-v)(x)\big)-v(x)\|+K\\
 &\le Lip\,(\psi)Lip\,(v)\|G \circ(\varphi-v)(x)-x\|+K.
  \end{aligned}
 \]
It follows that
$$\|G \circ(\varphi-v)(x)-x\|\le K\big((1-Lip\,(v)Lip\,(\psi)\big)^{-1}.$$
We have proved that $f-u \in \cal C \cal L \cal E(X,Y)$.
\end{proof}

\noindent{\bf Remarks.}

\smallskip
Note that for $X$ and $Y$ Banach spaces and $f:X\to Y$ coarse Lipschitz, $f\in \cal C \cal L \cal E(X,Y)$ if and only if all the elements of its equivalence class in $CL(X,Y)$ belong to $\cal C \cal L \cal E(X,Y)$ (this is a consequence of Proposition \ref{CLE}). So, in the particular situation described in Proposition \ref{open}, we can denote $CLE(X,Y)$ the set of equivalent classes of elements of $\cal C \cal L \cal E(X,Y)$ and state that it is open in the quotient space $CL(X,Y)$.

\smallskip
In this work we have chosen to follow Gromov's definition for the invertible elements of $\cal C \cal L (X,Y)$. One of the advantages of this definition is to coincide with the notion of net equivalence for Banach spaces. However, in pursuing the study of our normed quotient space, it could be more natural to say that $f\in  \cal C \cal L (X,Y)$ is ``invertible'' if there exists $g\in \cal C \cal L (Y,X)$ such that $Lip_\infty\big((f\circ g)-Id_Y\big)= Lip_\infty\big((g\circ f)-Id_X\big)=0$.

\section{Background on the Gorelik principle.}\label{Gorelik}

The tool that we shall now
describe is the Gorelik principle. It was initially devised by
Gorelik in \cite{Gorelik1994} to prove that $\ell_p$ is not uniformly
homeomorphic to $L_p$, for $1<p<\infty$. Then it was developed by
Johnson, Lindenstrauss and Schechtman \cite{JohnsonLindenstraussSchechtman1996} to prove that for
$1<p<\infty$, $\ell_p$ has a unique uniform structure. We now recall the crucial ingredient in the proof of the Gorelik Principle (see step (i) in the proof of Theorem 10.12 in \cite{BenyaminiLindenstrauss2000}). This statement relies on Brouwer's fixed point
theorem and on the existence of Bartle-Graves continuous selectors. We refer the reader to \cite{AlbiacKalton2016} or \cite{BenyaminiLindenstrauss2000} for its proof.

\begin{Prop}\label{G} Let $X_0$ be a finite-codimensional subspace
of  a Banach space $X$ and let $0<c<d$. Then,  there exists a compact subset $A$ of
$dB_X$ such that for every continuous map $\phi:A\to X$ satisfying $\|\phi(a)-a\|\le c$ for
all $a\in A$, we have that $\phi(A)\cap X_0\neq \emptyset$.
\end{Prop}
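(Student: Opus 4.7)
The plan is to reduce the statement to a fixed-point problem on the finite-dimensional quotient $X/X_0$ and then apply Brouwer's theorem, where the compact set $A$ is built as the image of a ball under a Bartle--Graves continuous selector of the quotient map.

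First I would set $Q:X\to X/X_0$ to be the quotient map and recall that, since $X_0$ has finite codimension, $X/X_0$ is finite-dimensional and the quotient norm satisfies $\|\bar x\|_{X/X_0}=\inf\{\|x\|_X:\,Q(x)=\bar x\}$. The Bartle--Graves selection theorem (or a direct partition-of-unity construction on the compact unit sphere of $X/X_0$, followed by positively homogeneous extension) yields, for any prescribed $\eta>0$, a continuous positively homogeneous map $s:X/X_0\to X$ such that $Q\circ s=\mathrm{Id}_{X/X_0}$ and $\|s(\bar x)\|_X\le(1+\eta)\|\bar x\|_{X/X_0}$ for every $\bar x$. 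Because $c<d$, I can pick $r\in(c,d)$ and then $\eta>0$ small enough that $(1+\eta)r\le d$; then I set
$$A=s\bigl(r\,\overline{B}_{X/X_0}\bigr).$$
This $A$ is compact (continuous image of a compact set, since $X/X_0$ is finite-dimensional) and it lies in $dB_X$ by our choice of $\eta$ and $r$.

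Next I would let $\phi:A\to X$ be a continuous map with $\|\phi(a)-a\|\le c$ for every $a\in A$, and consider the continuous map
$$T:r\,\overline{B}_{X/X_0}\longrightarrow X/X_0,\qquad T(\bar x)=\bar x-Q\bigl(\phi(s(\bar x))\bigr).$$
Since $Q$ has norm one, the hypothesis on $\phi$ gives
$$\bigl\|T(\bar x)-\bar x\bigr\|_{X/X_0}=\bigl\|Q\bigl(\phi(s(\bar x))-s(\bar x)\bigr)\bigr\|_{X/X_0}\le\bigl\|\phi(s(\bar x))-s(\bar x)\bigr\|_X\le c.$$
Hence $T(\bar x)\in\bar x+c\,\overline{B}_{X/X_0}\subset r\,\overline{B}_{X/X_0}$ because $c<r$, so $T$ is a continuous self-map of the compact convex finite-dimensional set $r\,\overline{B}_{X/X_0}$. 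Brouwer's fixed point theorem produces $\bar x_0$ with $T(\bar x_0)=\bar x_0$, which is exactly $Q(\phi(s(\bar x_0)))=0$. Setting $a_0=s(\bar x_0)\in A$ gives $\phi(a_0)\in X_0$, establishing $\phi(A)\cap X_0\neq\emptyset$.

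The main obstacle is the selector construction with a norm bound close to $1$: a crude Bartle--Graves selector only yields $\|s(\bar x)\|\le K\|\bar x\|$ for some $K$ that could be much larger than $1$, and inserting such a $K$ would degrade the inclusion $A\subset dB_X$ to $cK<d$ rather than the sharper $c<d$. The way to overcome this is to exploit that the quotient norm is itself an infimum of lifts, so on the compact sphere $S_{X/X_0}$ one can choose lifts of norm at most $1+\eta/2$ and glue them using a finite partition of unity subordinate to a sufficiently fine cover; positive homogeneous extension then preserves the bound $(1+\eta)\|\bar x\|_{X/X_0}$. Once this sharp selector is in hand, the Brouwer step is essentially automatic and yields the compact $A\subset dB_X$ required by the statement.
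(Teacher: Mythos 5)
The paper does not supply a proof of this proposition; it refers the reader to \cite{AK} and \cite{BL} and notes that the statement ``relies on Brouwer's fixed point theorem and on the existence of Bartle--Graves continuous selectors.'' Your proposal reconstructs precisely that argument: pass to the finite-dimensional quotient $X/X_0$, lift back via a continuous positively homogeneous section of the quotient map $Q$ whose norm is close to $1$, take $A$ to be the image of a ball of radius $r\in(c,d)$ under this section, and run Brouwer on the ball in $X/X_0$. Conceptually this is the same route as in the cited sources, and your remark on producing the section with bound $(1+\eta)\|\cdot\|$ (rather than a crude Bartle--Graves constant $K$) is exactly what is needed to get the sharp hypothesis $c<d$ rather than $cK<d$.

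There is, however, a concrete algebraic slip in the Brouwer step. With $T(\bar x)=\bar x-Q(\phi(s(\bar x)))$, you assert $\|T(\bar x)-\bar x\|=\|Q(\phi(s(\bar x))-s(\bar x))\|\le c$ and then conclude $T(\bar x)\in\bar x+c\,\overline{B}_{X/X_0}\subset r\,\overline{B}_{X/X_0}$. Both steps are wrong as written: $T(\bar x)-\bar x=-Q(\phi(s(\bar x)))$, which is \emph{not} $Q(\phi(s(\bar x))-s(\bar x))=Q(\phi(s(\bar x)))-\bar x$; and even if one had $\|T(\bar x)-\bar x\|\le c$, the inclusion $\bar x+c\,\overline{B}_{X/X_0}\subset r\,\overline{B}_{X/X_0}$ fails for $\bar x$ near the boundary of $r\,\overline{B}_{X/X_0}$, so it would not give a self-map. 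The correct and stronger observation is that, since $Q(s(\bar x))=\bar x$,
$$T(\bar x)=Q(s(\bar x))-Q\bigl(\phi(s(\bar x))\bigr)=Q\bigl(s(\bar x)-\phi(s(\bar x))\bigr),$$
whence $\|T(\bar x)\|\le\|s(\bar x)-\phi(s(\bar x))\|\le c<r$. Thus $T$ maps $r\,\overline{B}_{X/X_0}$ into $c\,\overline{B}_{X/X_0}\subset r\,\overline{B}_{X/X_0}$, Brouwer applies, and a fixed point $\bar x_0$ gives $Q(\phi(s(\bar x_0)))=0$, i.e.\ $\phi(s(\bar x_0))\in X_0$ with $s(\bar x_0)\in A$. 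After this one-line correction your argument is complete and matches the standard proof.
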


Let us now state the Gorelik principle as it can be found in \cite{AlbiacKalton2016}, \cite{BenyaminiLindenstrauss2000} or \cite{GodefroyKaltonLancien2001}.

\begin{Thm}\label{Gor1} Let $X$ and $Y$ be
two Banach spaces and let $f$ be a homeomorphism from $X$ onto $Y$ whose
inverse is uniformly continuous.  Let $b,d>0$ so that $\omega(f^{-1},b)<d$, where $\omega(f^{-1},.)$ is the modulus of uniform continuity of $f^{-1}$.
Assume that $X_0$ is a closed finite codimensional subspace of $X$.  Then
there exists a compact subset $K$ of $Y$ so that
$$ bB_Y \subset K+ f(2d B_{X_0}).$$
In particular, if $f$ is a Lipschitz isomorphism such that $Lip(f)\le 1$ and $Lip(f^{-1})\le M$, the condition $\omega(f^{-1},b)<d$ is satisfied when $Mb<d$.

\end{Thm}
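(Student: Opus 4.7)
The plan is to reduce the conclusion to Proposition \ref{G} by exhibiting, for each $y\in bB_Y$, a continuous map on a single compact set whose image must hit $X_0$; the resulting intersection point will then encode $y$ modulo a compact correction. First, I would apply Proposition \ref{G} to the finite-codimensional subspace $X_0$ with parameters $c:=\omega(f^{-1},b)$ and the given $d$: the hypothesis $\omega(f^{-1},b)<d$ yields $c<d$, producing a compact set $A\subset dB_X$ such that every continuous $\phi:A\to X$ satisfying $\|\phi(a)-a\|\le c$ on $A$ obeys $\phi(A)\cap X_0\neq \emptyset$. It is important that $A$ depends only on $b$ and $d$, not on the vector $y$ that will be plugged in below.

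Next, for $y\in bB_Y$ I would define $\phi_y:A\to X$ by $\phi_y(a):=f^{-1}(y+f(a))$, which is continuous as a composition of continuous maps. For any $a\in A$, the estimate $\|(y+f(a))-f(a)\|=\|y\|\le b$ together with the definition of the modulus of uniform continuity gives
$$\|\phi_y(a)-a\|=\|f^{-1}(y+f(a))-f^{-1}(f(a))\|\le \omega(f^{-1},b)=c.$$
Proposition \ref{G} then supplies $a\in A$ with $x_0:=\phi_y(a)\in X_0$. By construction $f(x_0)=y+f(a)$, hence $y=f(x_0)-f(a)$, and the triangle inequality yields $\|x_0\|\le \|x_0-a\|+\|a\|<c+d\le 2d$, so $x_0\in 2dB_{X_0}$.

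Setting $K:=-f(A)$, which is compact in $Y$ as the continuous image of the compact set $A$, the identity $y=f(x_0)+(-f(a))$ shows $y\in f(2dB_{X_0})+K$, proving the desired inclusion $bB_Y\subset K+f(2dB_{X_0})$. The ``in particular'' clause is then immediate: under the Lipschitz hypotheses $\omega(f^{-1},t)\le Mt$, so $Mb<d$ forces $\omega(f^{-1},b)<d$. The only delicate point worth emphasizing is that $K$ must be produced uniformly in $y$: this is arranged precisely by fixing $A$ once and for all via a single application of Proposition \ref{G} before letting $y$ vary, so that only $\phi_y$, and not $A$, depends on $y$. Past that bookkeeping, the argument is entirely routine.
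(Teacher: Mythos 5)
Your proof is correct, and it is exactly the standard argument via Proposition~\ref{G}: the paper does not spell out a proof of Theorem~\ref{Gor1} (it cites \cite{AK}, \cite{BL}, \cite{GKL}), but its proof of the closely related Theorem~\ref{Gor2} follows the same template, fixing the compact set $A$ once via Proposition~\ref{G} and then, for each $y$, considering the perturbation $\phi_y(a)=g(y+f(a))$ (here $g=f^{-1}$) to extract a point of $X_0$ in the ball of radius $2\alpha$, with $K=-f(A)$.

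One small point worth noting explicitly, since Proposition~\ref{G} requires $0<c<d$: you take $c=\omega(f^{-1},b)$, which is strictly positive because $f^{-1}$ is injective and $Y$ is a nontrivial normed space, so no degeneracy occurs. Apart from that, your estimate $\|x_0\|\le\|x_0-a\|+\|a\|\le c+d<2d$ and the identification $K=-f(A)$ are precisely what is needed; the uniformity of $A$ in $y$, which you emphasize, is indeed the crux.
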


We will now state a version of the Gorelik principle that will be used to study coarse equivalent Banach spaces. For the sake of completeness we shall reproduce the proof that can be found in \cite{GodefroyLancienZizler2014} (Theorem 3.8) with more attention given on keeping optimal estimates and with slightly weaker assumptions.

\begin{Thm}\label{Gor2} Let $X$ and $Y$ be two Banach spaces. Assume that $f:X\to Y$ and $g:Y\to X$ are continuous, and  that there exist constants $C,D,M > 0$ such that
$$\forall y,y'\in Y \ \ \|g(y)-g(y')\|\le D+ M\|y-y'\|$$
and
$$\forall x\in X\ \ \|(g\circ f)(x)-x\|\le C\ \ {\rm and} \ \ \forall y\in Y\ \ \|(f\circ g)(y)-y\|\le C.$$
Let $\lambda<1$. Then for any $\alpha\ge \alpha_0=2(C+D)(1-\lambda)^{-1}$ and any finite codimensional subspace $X_0$ of $X$, there is a compact subset $K$ of $Y$ so that
$$\frac{\lambda\alpha}{M}B_Y\subset K+CB_Y+f(2\alpha B_{X_0}).$$
\end{Thm}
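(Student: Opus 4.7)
The plan is to mimic the classical Gorelik proof, using $g$ in place of $f^{-1}$ and keeping track of the additional errors coming from the coarse Lipschitz data of $g$ and from the quasi-inversion identities $g\circ f\approx\mathrm{id}_X$ and $f\circ g\approx\mathrm{id}_Y$. The continuity of $f$ and $g$ is what allows us to apply Proposition \ref{G} (which rests on Brouwer's fixed point theorem), and the estimate $\alpha\geq\alpha_0=2(C+D)(1-\lambda)^{-1}$ will be precisely what is needed to make the two sides of the Gorelik inequality $c<d$ compatible.

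Concretely, given $\alpha\geq\alpha_0$ and $X_0$ a finite-codimensional subspace of $X$, I would set $d=\alpha$ and $c=\lambda\alpha+C+D$. The hypothesis $\alpha\geq\alpha_0$ gives $(1-\lambda)\alpha\geq 2(C+D)>C+D$, hence $c<d$. Proposition \ref{G} then produces a compact set $A\subset dB_X=\alpha B_X$ such that every continuous map $\phi:A\to X$ with $\|\phi(a)-a\|\leq c$ on $A$ meets $X_0$. I would take $K:=-f(A)$, which is compact in $Y$ since $f$ is continuous.

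The heart of the proof is the choice of the test map. For $y\in Y$ with $\|y\|\leq\lambda\alpha/M$, define $\phi:A\to X$ by $\phi(a)=g(f(a)+y)$. This is continuous by continuity of $f$ and $g$. The key estimate splits as
$$\|\phi(a)-a\|\leq\|g(f(a)+y)-g(f(a))\|+\|g(f(a))-a\|\leq(D+M\|y\|)+C\leq D+\lambda\alpha+C=c,$$
using the coarse Lipschitz bound for $g$ and the quasi-inversion identity $\|g\circ f-\mathrm{id}_X\|_\infty\leq C$. Proposition \ref{G} then yields $a_0\in A$ with $x_0:=\phi(a_0)=g(f(a_0)+y)\in X_0$, and $\|x_0\|\leq\|a_0\|+c\leq d+c<2d=2\alpha$, so $x_0\in 2\alpha B_{X_0}$.

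It remains to express $y$ in the required form. Applying $f$ to the identity $x_0=g(f(a_0)+y)$ and using $\|f\circ g-\mathrm{id}_Y\|_\infty\leq C$, we get $f(x_0)=f(a_0)+y+c_0$ with $\|c_0\|_Y\leq C$, i.e.\
$$y=f(x_0)+(-f(a_0))+(-c_0)\in f(2\alpha B_{X_0})+K+CB_Y,$$
which is exactly the desired inclusion. The only delicate point is the bookkeeping that ensures $c<d$ for $\alpha\geq\alpha_0$; once this is set up, the rest reduces to combining Proposition \ref{G} with the two defining estimates of a coarse Lipschitz equivalence, together with the continuity of $f$ and $g$ — precisely the reason for the detour through Proposition \ref{CLE}(iii).
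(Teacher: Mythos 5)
Your proof is correct and follows essentially the same route as the paper: same compact set $A$ from Proposition~\ref{G}, same test map $\phi(a)=g(f(a)+y)$, same split of $\|\phi(a)-a\|$ into the coarse Lipschitz estimate for $g$ plus the quasi-inverse bound $C$, and the same final unwinding using $\|f\circ g-\mathrm{id}_Y\|_\infty\le C$ with $K=-f(A)$. The only cosmetic difference is that the paper introduces $\mu=\tfrac{1+\lambda}{2}$ and invokes Proposition~\ref{G} with $c=\mu\alpha$, $d=\alpha$, whereas you apply it directly with $c=\lambda\alpha+C+D$ and $d=\alpha$; both choices give $0<c<d$ under $\alpha\ge\alpha_0$ and lead to $\phi(a_0)\in 2\alpha B_{X_0}$.
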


\begin{proof} Let $\mu=\frac{1+\lambda}{2}$, $\alpha_0=\frac{C+D}{\mu-\lambda}=\frac{2(C+D)}{1-\lambda}$ and $\alpha\ge \alpha_0$. Let also $X_0$ be a finite codimensional subspace of $X$.\\
It follows from Proposition \ref{G} that there exists a compact subset $A$ of
$\alpha B_X$ such that for every continuous map $\phi:A\to X$ satisfying $\|\phi(a)-a\|\le \mu\alpha$ for all $a\in A$, we have that $\phi(A)\cap X_0\neq \emptyset$.\\
Consider now $y\in \frac{\lambda\alpha}{M}B_Y$ and define $\phi:A\to X$ by
$\phi(a)=g(y+f(a))$. Then $\phi$ is clearly continuous and
$$\forall a\in A, \ \ \|\phi(a)-a\|\le C+\|g(y+f(a))-g(f(a))\|\le C+D+M\|y\|\le C+D+\lambda \alpha\le \mu\alpha.$$
Hence, there exists $a\in A$ so that $\phi(a)\in X_0$. Since $\|a\|\le \alpha$ and $\|\phi(a)-a\|\le \mu\alpha$, we have that $\phi(a)\in 2\alpha B_{X_0}$.\\
Finally, we use the fact that $\|(f\circ g)(y+f(a))-(y+f(a))\|\le C$ to conclude that $y\in K+CB_Y+f(2\alpha B_{X_0})$, where $K=-f(A)$ is a compact subset of $Y$.
\end{proof}

\noindent {\bf Remark.} Note that in the above result we have not assumed that $f$ is coarse Lipschitz.

\section{Asymptotic uniform smoothness and coarse Lipschitz equivalence}\label{auscle}

We now recall the definitions of the modulus of asymptotic uniform smoothness of a norm and the modulus of weak$^*$ asymptotic uniform convexity of a dual norm. They are due to V. Milman \cite{Milman1971} and we follow the notation from \cite{JohnsonLindenstraussPreissSchechtman2002}. So let $(X,\|\ \|)$ be a Banach space. For $t>0$, and $x\in S_X$  we define
$$\overline{\rho}_X(x,t)=\inf_{Y}\sup_{y\in S_Y}(\|x+t y\|-1),$$
where $Y$ runs through all closed subspaces of $X$ of finite codimension. Then
$$\overline{\rho}_X(t)=\sup_{x\in S_X}\overline{\rho}_X(x,t).$$
The norm $\|\ \|$ is said to be
{\it asymptotically uniformly smooth} (in short AUS) if
$$\lim_{t \to 0}\frac{\overline{\rho}_X(t)}{t}=0.$$
We say that the norm $\|\ \|$ is {\it asymptotically uniformly flat} if
$$\exists t_0\in (0,+\infty)\ \ \ \forall t\in [0,t_0]\ \ \overline{\rho}_X(t)=0.$$
Now, for $t>0$, and $x^*\in S_{X^*}$  we define
$$ \overline{\theta}_X(x^*,t)=\sup_{E}\inf_{y^*\in S_{E^\perp}}(\|x^*+ty^*\|-1),$$
where $E$ runs through all finite dimensional subspaces of $X$. Then
$$\overline{\theta}_X(t)=\inf_{x^*\in S_{X^*}}\overline{\theta}_X(x^*,t).$$
The norm of $X^*$ is said to be {\it weak$^*$ asymptotically uniformly convex} (in short w$^*$-AUC) if
$$\forall t>0\ \ \ \ \overline{\theta}_X(t)>0.$$

The duality between these two moduli is now well understood. The following complete and precise statement is taken from \cite{DilworthKutzarovaLancienRandri2017} Proposition 2.1.

\begin{Prop}\label{Young} Let $X$ be a Banach space and $0<\sigma,\tau<1$.\\
\smallskip
(a) If $\overline{\rho}_X(\sigma)<\frac{\sigma\tau}{6}$, then  $\overline{\theta}_X(\tau)>  \frac{\sigma\tau}{6}$.\\
\smallskip
(b) If $\overline{\theta}_X(\tau)> \sigma\tau$, then $\overline{\rho}_X(\sigma)<\sigma\tau$
\end{Prop}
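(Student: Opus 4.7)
The proposition is a pair of quantitative Fenchel--Young-type dualities between the primal modulus $\overline{\rho}_X$ on $X$ and the dual modulus $\overline{\theta}_X$ on $X^*$. I would prove both by contraposition, using Hahn--Banach and Banach--Alaoglu (weak-$\ast$ compactness) as the main tools.

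For part (b), I would suppose $\overline{\rho}_X(\sigma) \geq \sigma\tau$ and seek $x^* \in S_{X^*}$ with $\overline{\theta}_X(x^*,\tau) \leq \sigma\tau$. Fix $\varepsilon > 0$, choose $x \in S_X$ with $\overline{\rho}_X(x,\sigma) > \sigma\tau - \varepsilon$, and let $x^* \in S_{X^*}$ be a norming functional for $x$. For each finite-codim $Y \subset \ker x^*$, pick $y_Y \in S_Y$ with $\|x + \sigma y_Y\| > 1 + \sigma\tau - \varepsilon$ and a norming $\phi_Y \in S_{X^*}$ of $x + \sigma y_Y$. The identity $(\phi_Y - x^*)(x + \sigma y_Y) = \|x+\sigma y_Y\| - 1$ (using $x^*(y_Y) = 0$) gives quantitative control on $\|\phi_Y - x^*\|$. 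For a given finite-dim $E \subset X$, pass by Banach--Alaoglu to a cofinal subnet where $\phi_Y$ is uniformly close to $x^*$ on $E$, then apply a small Hahn--Banach correction to turn $(\phi_Y - x^*)/\|\phi_Y - x^*\|$ into a unit vector $\tilde y^* \in S_{E^\perp}$. The convex-combination identity $x^* + \alpha(\phi_Y - x^*) = (1-\alpha)x^* + \alpha\phi_Y$ with $\alpha = \tau/\|\phi_Y - x^*\|$ then yields $\|x^* + \tau \tilde y^*\| \leq 1 + \sigma\tau$ up to an error vanishing with $\varepsilon$, witnessing $\overline{\theta}_X(x^*,\tau) \leq \sigma\tau$.

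For part (a), I would assume the contrapositive $\overline{\theta}_X(\tau) \leq \frac{\sigma\tau}{6}$. Choose $x^* \in S_{X^*}$ with $\overline{\theta}_X(x^*,\tau) \leq \frac{\sigma\tau}{6}$ (up to $\varepsilon$) and $x \in S_X$ approximately norming $x^*$. For any given finite-codim $Y$, apply the hypothesis with a finite-dim $E \ni x$ to produce $y^*_0 \in S_{E^\perp}$ with $\|x^* + \tau y^*_0\| \leq 1 + \frac{\sigma\tau}{6}$; note $y^*_0(x) = 0$. Using Hahn--Banach, pick $y \in S_{Y \cap \ker x^*}$ with $y^*_0(y)$ close to $1$ (enlarging $E$ if necessary so that $y^*_0$ lies ``far enough'' from $Y^\perp + \mathbb{R} x^*$ in $X^*$, making the restriction of $y^*_0$ to $Y \cap \ker x^*$ of norm close to $1$). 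The duality pairing then gives $(x^* + \tau y^*_0)(x + \sigma y) \approx 1 + \sigma\tau$, whence
\[
\|x + \sigma y\| \;\geq\; \frac{(x^*+\tau y^*_0)(x+\sigma y)}{\|x^* + \tau y^*_0\|} \;\geq\; \frac{1 + \sigma\tau}{1 + \sigma\tau/6} \;=\; 1 + \frac{5\sigma\tau}{6 + \sigma\tau} \;\geq\; 1 + \frac{\sigma\tau}{6},
\]
where the last inequality follows from $\sigma\tau \leq 1$. Since this holds for every finite-codim $Y$, one concludes $\overline{\rho}_X(x,\sigma) \geq \frac{\sigma\tau}{6}$ and hence $\overline{\rho}_X(\sigma) \geq \frac{\sigma\tau}{6}$.

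The main obstacle in both arguments is the careful bookkeeping of Hahn--Banach corrections, renormalizations, and weak-$\ast$ approximations needed to align the two opposite quantifier patterns: $\overline{\rho}_X$ involves $\inf_{Y \text{ finite-codim}}\sup_{y}$ while $\overline{\theta}_X$ involves $\sup_{E \text{ finite-dim}}\inf_{y^*}$. For part (a), the delicate point is the approximate norm-attainment of $y^*_0$ on $Y \cap \ker x^*$, which requires enlarging $E$ in a way compatible with $Y$; the generous constant $6$ (as opposed to the sharp $1$ in (b)) provides exactly the slack absorbed by these approximations.
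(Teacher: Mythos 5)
The paper does not prove this proposition; it simply cites it as Proposition 2.1 of the DKLR reference, so there is no ``paper's own proof'' to compare your argument with. Your general strategy (duality via Hahn--Banach and Banach--Alaoglu, arguing by contraposition) is the natural one, but each half of your argument has a problem, one of them serious.

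In part (b) the gap is genuine. You take $x^*\in S_{X^*}$ to be a norming functional of $x$ and then assert that, after passing to a subnet, $\phi_Y$ (the norming functional of $x+\sigma y_Y$) is ``uniformly close to $x^*$ on $E$''. There is no reason for this: Banach--Alaoglu gives a weak$^*$ cluster point $\phi$ of $(\phi_Y)$, but $\phi\ne x^*$ in general. A concrete counterexample: in $X=\ell_2$ with $x=e_1$, $x^*=e_1$ and $y_Y$ a far-out coordinate vector, one has $\phi_Y=(e_1+\sigma y_Y)/\sqrt{1+\sigma^2}\to e_1/\sqrt{1+\sigma^2}$ weak$^*$, which is not $x^*$; hence $\phi_Y-x^*$ does \emph{not} tend weak$^*$ to $0$ and cannot be corrected into an element of $S_{E^\perp}$ when $x\in E$. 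The fix is to let $\phi$ be the weak$^*$ cluster point of $(\phi_Y)$ (passing to the cofinal subnet of $Y\subset\ker\phi$, so that $\phi(y_Y)=0$ eventually), observe that $\|\phi\|\ge\phi(x)\ge 1-\sigma(1-\tau)-\varepsilon$ and $\|z^*_Y\|\ge\tau-\varepsilon/\sigma$ where $z^*_Y=\phi_Y-\phi$ is weak$^*$ null, set $x^*=\phi/\|\phi\|$, and write $x^*+\tau z^*_Y/\|z^*_Y\|=\bigl(\tfrac{1}{\|\phi\|}-\tfrac{\tau}{\|z^*_Y\|}\bigr)\phi+\tfrac{\tau}{\|z^*_Y\|}\phi_Y$ to get $\|x^*+\tau z^*_Y/\|z^*_Y\|\,\|\le 1+\tfrac{\tau}{\|z^*_Y\|}(1-\|\phi\|)$, which in the limit is $\le 1+\sigma\tau$. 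That is the convex-combination identity you want, but centred at $\phi$, not at the norming functional of $x$.

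In part (a), the intermediate claim that you can arrange $y^*_0(y)$ ``close to $1$'' is not achievable. Since $x\in E$ and $y^*_0\in S_{E^\perp}$ while $x^*(x)\approx 1$, a Hahn--Banach computation shows that enlarging $E$ can only guarantee $d(y^*_0,\,Y^\perp+\mathbb R x^*)\ge\tfrac{1}{2}-o(1)$ (and this is sharp, e.g.\ in $\ell_1^2$ with $x^*=(1,1)$, $y^*_0=(0,1)$). So one only obtains $y^*_0(y)\gtrsim\tfrac12$. This is not fatal, because you correctly anticipated that the constant $6$ absorbs approximation losses: the pairing then gives $\|x+\sigma y\|\ge\frac{1+\sigma\tau/2}{1+\sigma\tau/6}=1+\frac{2\sigma\tau}{6+\sigma\tau}\ge 1+\frac{\sigma\tau}{6}$ since $\sigma\tau<6$, and the conclusion $\overline\rho_X(\sigma)\ge\sigma\tau/6$ survives. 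You should, however, replace the explicit ``$y^*_0(y)$ close to $1$'' and the displayed $\frac{1+\sigma\tau}{1+\sigma\tau/6}$ estimate accordingly, as written they are false.
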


As an immediate consequence we have that $\|\ \|_X$ is AUS if and only if $\|\ \|_{X^*}$ is w$^*$-AUC.

\medskip Let us also detail a few other classical consequences. First we recall that for a function $f$ which is continuous monotone non decreasing on $[0,1]$ and such that $f(0)=0$, its dual Young function is denoted $f^*$ and  defined by
$$\forall s\in [0,1]\ \ \ f^*(s)=\sup\{st-f(t),\ t\in [0,1]\}.$$
As a corollary of the previous proposition we obtain.

\begin{Cor}\label{Young2} Let $X$ be a Banach space. Then
$$\forall s\in [0,1]\ \ \ (\overline{\theta}_X)^*(s)\ge \overline{\rho}_X\big(\frac{s}{2}\big)\ \ \ {\text and}\ \ \ (\overline{\theta}_X)^*\big(\frac{s}{6}\big)\le \overline{\rho}_X(s).$$
\end{Cor}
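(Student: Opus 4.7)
The plan is to view Corollary \ref{Young2} as a direct convex-duality reformulation of Proposition \ref{Young}, requiring essentially no new idea beyond careful bookkeeping of parameters.

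First I would record the a priori bound $\overline{\rho}_X(s)\le s$ for every $s\in[0,1]$, which is immediate from the inequality $\|x+ty\|-1\le t\|y\|=t$ in the defining expression of $\overline{\rho}_X$. This will be needed to guarantee that the test parameter used below lies in $[0,1]$.

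For the lower bound $(\overline{\theta}_X)^*(s)\ge\overline{\rho}_X(s/2)$, I would take the contrapositive of assertion (b) of Proposition \ref{Young}: $\overline{\rho}_X(\sigma)\ge\sigma\tau$ implies $\overline{\theta}_X(\tau)\le\sigma\tau$. Choosing $\sigma:=s/2$ and $\tau:=2\overline{\rho}_X(s/2)/s$, which lies in $[0,1]$ by the a priori estimate, the hypothesis holds with equality and yields $\overline{\theta}_X(\tau)\le\overline{\rho}_X(s/2)$; hence
$$s\tau-\overline{\theta}_X(\tau)\ge 2\overline{\rho}_X(s/2)-\overline{\rho}_X(s/2)=\overline{\rho}_X(s/2),$$
and the definition of the dual Young function gives the desired inequality (the degenerate case $\overline{\rho}_X(s/2)=0$ being trivial by taking $\tau=0$).

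For the upper bound $(\overline{\theta}_X)^*(s/6)\le\overline{\rho}_X(s)$, I would fix $t\in[0,1]$ and verify that $\tfrac{st}{6}-\overline{\theta}_X(t)\le\overline{\rho}_X(s)$. If $\overline{\theta}_X(t)\ge\tfrac{st}{6}$, the left-hand side is non-positive and the bound is trivial. Otherwise, the contrapositive of (a) of Proposition \ref{Young}, applied with $\sigma=s$ and $\tau=t$, gives $\overline{\rho}_X(s)\ge\tfrac{st}{6}\ge\tfrac{st}{6}-\overline{\theta}_X(t)$. Taking the supremum over $t\in[0,1]$ concludes. No substantive obstacle is expected; the only point requiring attention is to check that the parameter $\tau=2\overline{\rho}_X(s/2)/s$ chosen above really lies in the interval $[0,1]$ on which the Young conjugate is computed, and this is exactly the role of the a priori bound $\overline{\rho}_X\le\mathrm{id}$ recorded in the first step.
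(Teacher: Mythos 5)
Your argument is correct and is essentially the same as the paper's: for the first inequality both you and the paper test the Young conjugate at $\tau = 2\overline{\rho}_X(s/2)/s$ and invoke the contrapositive of Proposition~\ref{Young}~(b), and for the second both invoke the contrapositive of Proposition~\ref{Young}~(a) with $\sigma=s$ (the paper phrases this as a proof by contradiction, you phrase it directly; this is a cosmetic difference). Your added check that $\tau\le 1$ via the a priori bound $\overline{\rho}_X\le\mathrm{id}$, and your treatment of the degenerate case $\overline{\rho}_X(s/2)=0$, are correct points of care that the paper leaves implicit.
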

\begin{proof} Consider first $t=\frac2s\overline{\rho}_X(\frac{s}{2})\in [0,1]$. Then $\overline{\rho}_X(\frac{s}{2})=\frac{s}{2}t$. So it follows from Proposition \ref{Young} (b)  that $\overline{\theta}_X(t)\le \frac{s}{2}t$. Therefore $(\overline{\theta}_X)^*(s)\ge st-\overline{\theta}_X(t)\ge \frac{s}{2}t=\overline{\rho}_X(\frac{s}{2}).$

Assume now that $(\overline{\theta}_X)^*(\frac{s}{6}) > \overline{\rho}_X(s).$ Then there exists $t\in [0,1]$ such that\\
$\frac{s}{6}t-\overline{\theta}_X(t)>\overline{\rho}_X(s)$. Thus $\overline{\theta}_X(t)<\frac{s}{6}t-\overline{\rho}_X(s)\le\frac{s}{6}t$. It now follows from Proposition \ref{Young} (a) that $\overline{\rho}_X(s)\ge \frac{s}{6}t$. But this implies that $\overline{\theta}_X(t)<0$, which is impossible.
\end{proof}

The following theorem states that the existence of an asymptotically  uniformly smooth norm is stable under Lipschitz isomorphisms and appeared first in \cite{GodefroyKaltonLancien2001}, in a separable setting. Its proof can also be found in the recent textbook \cite{AlbiacKalton2016} (see paragraph 14.6). The general case can be deduced by routine arguments of separable saturation and separable determination of the moduli. However, we shall detail here the direct proof in the general case. The only modification is that we deal with the definition of the asymptotic moduli instead of using weak$^*$-null or weakly null sequences.

\begin{Thm}\label{Lipschitz-equivalence} Let $X$ and $Y$ be two Banach spaces and assume that $f:X\to Y$ is a bijection such that $Lip(f)\le 1$ and $Lip(f^{-1})\le M$. Then there exists an equivalent norm $|\ |$ on $Y$ such that $\|\ \|_Y\le |\ |\le M\|\ \|_Y$ and
$$\forall t\in [0,1],\ \ \ \overline{\theta}_{|\ |}(t)\ge \overline{\theta}_X\big(\frac{t}{4M}\big).$$
\end{Thm}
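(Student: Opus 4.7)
I would follow the scheme of \cite{GKL} (cf.\ \cite{AK}, Section 14.6), phrased directly in terms of the definitions of $\overline{\rho}_X$ and $\overline{\theta}_X$ via finite-dimensional and finite-codimensional subspaces, so that no separability hypothesis is needed. Normalize $f(0)=0$, so that $M^{-1}\|x\|_X\le\|f(x)\|_Y\le\|x\|_X$ and hence $M^{-1}B_Y\subset f(B_X)\subset B_Y$. A natural candidate for $|\cdot|$ on $Y$ is the Minkowski functional of $\overline{\co}\bigl(f(B_X)\cup(-f(B_X))\bigr)$; this gives $\|y\|_Y\le|y|\le M\|y\|_Y$ with dual norm
$$|y^{*}|_{*}\;=\;\sup_{x\in B_X}|y^{*}(f(x))|,\qquad M^{-1}\|y^{*}\|_{Y^{*}}\le|y^{*}|_{*}\le\|y^{*}\|_{Y^{*}}.$$

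Fix $y^{*}\in S_{(Y^{*},|\cdot|_{*})}$, $t\in(0,1]$, $\alpha:=\overline{\theta}_X(t/(4M))$, and a small $\delta>0$. Choose $x_0\in B_X$ with $y^{*}(f(x_0))>1-\delta$ and produce a linearization $x_0^{*}\in B_{X^{*}}$ of $y^{*}\circ f$ near $x_0$ (up to arbitrarily small error on a ball of controlled radius). By the definition of $\overline{\theta}_X(x_0^{*},\cdot)$, there is a finite-dimensional subspace $F\subset X$ such that $\|x_0^{*}+(t/(4M))\xi^{*}\|_{X^{*}}\ge 1+\alpha$ for every $\xi^{*}\in F^{\perp}$ with $\|\xi^{*}\|_{X^{*}}=1$. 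Fix a closed complement $X_0$ of $F$ in $X$ (which exists since $F$ is finite-dimensional). Applying Theorem \ref{Gor1} to $X_0$ with $d>Mr$ yields a compact $K\subset Y$ with $rB_Y\subset K+f(2dB_{X_0})$; cover $K$ by a finite $\varepsilon$-net to produce a finite-dimensional $E\subset Y$ with $K\subset E+\varepsilon B_Y$.

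This $E$ serves as the finite-dimensional witness in the definition of $\overline{\theta}_{|\cdot|}(y^{*},t)$. For $z^{*}\in E^{\perp}$ with $|z^{*}|_{*}=1$ one has $\|z^{*}\|_{Y^{*}}\le M$ and thus $|z^{*}(k)|\le M\varepsilon$ on $K$. Linearizing $z^{*}\circ f$ on the translate $x_0+X_0$ yields a functional in $F^{\perp}$ with controlled $X^{*}$-norm. Combining the Gorelik decomposition $y=k+f(2dh)$ of a $y\in rB_Y$ chosen to nearly maximize $z^{*}$, rescaling back into $B_X$, and invoking the $\overline{\theta}_X$-inequality for $x_0^{*}$, one obtains $|y^{*}+tz^{*}|_{*}\ge 1+\alpha-O(\delta+\varepsilon+\text{linearization error})$. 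Letting $\delta,\varepsilon\to 0$ yields the required bound. The factor $4M$ arises from the composition of $\Lip(f^{-1})=M$ with the factor $2$ of the $2d$-expansion in Gorelik together with the rescaling from $(1+2d)B_X$ back to $B_X$.

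The main obstacle is the simultaneous control of the two nonlinear compositions $y^{*}\circ f$ and $z^{*}\circ f$: one must ensure that the linearizing functionals lie in the correct subspaces ($B_{X^{*}}$ for $x_0^{*}$, and $F^{\perp}$ for the linearization of $z^{*}\circ f$), that the $X_0$ used in the Gorelik application is the complement of the $F$ produced by $\overline{\theta}_X(x_0^{*},\cdot)$, and that the constants track through to yield precisely the argument $t/(4M)$ of $\overline{\theta}_X$. Arranging this compatibility is what forces the factor $4M$ in the statement.
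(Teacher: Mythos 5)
Your plan diverges from the paper's proof at a point that is not just a matter of bookkeeping but a genuine gap: both of your ``linearization'' steps (producing $x_0^{*}\in B_{X^{*}}$ approximating $y^{*}\circ f$ near $x_0$, and later linearizing $z^{*}\circ f$ on $x_0+X_0$) are not available in this generality. A Lipschitz map between arbitrary Banach spaces need not be G\^ateaux, let alone Fr\'echet or $\varepsilon$-Fr\'echet, differentiable anywhere; the known approximate-differentiability results (Preiss, Johnson--Lindenstrauss--Preiss--Schechtman) require separability and/or structural hypotheses on the target, and even then do not produce a differentiability point \emph{where $y^{*}\circ f$ nearly attains its supremum}. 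Your own opening sentence says the point of the argument is to avoid separability, so invoking a linearization step is internally inconsistent, and nothing in your outline replaces it.

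The paper's proof is entirely differentiability-free, and this is its main idea. It uses a different convex set, $C=\overline{\mathrm{conv}}\{(f(x)-f(x'))/\|x-x'\|:x\neq x'\}$, so that the dual norm is a supremum of difference quotients $|y^{*}|=\sup_{x\neq x'}\langle y^{*},f(x)-f(x')\rangle/\|x-x'\|$. One picks a pair $(x,x')$ nearly norming $y^{*}$, reduces by translation to $x'=-x$, $f(x')=-f(x)$, and then passes to the \emph{primal} modulus via Proposition \ref{Young}(b): $\overline{\theta}_X(t/4M)>\delta$ gives $\overline{\rho}_X(4M\delta/t)<\delta$, hence a finite-codimensional $X_0$ with $\|x+z\|\le(1+\delta)\|x\|$ for $z$ in a scaled $B_{X_0}$. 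Gorelik (Theorem~\ref{Gor1}) applied to this $X_0$ produces $z\in cB_{X_0}$ with $\langle z^{*},-f(z)\rangle$ large, and one bounds $|y^{*}+z^{*}|$ from below by evaluating the \emph{single} difference quotient $\langle y^{*}+z^{*},f(x)-f(z)\rangle/\|x-z\|$: the numerator is controlled by the near-maximality of $(x,x')$, the vanishing $\langle z^{*},f(x)\rangle=0$ (since $f(x)\in E$), and the Gorelik estimate, while the denominator is controlled by the AUS estimate $\|x-z\|\le(1+\delta)\|x\|$ applied at $x$ itself. No functional on $X$ is ever attached to $y^{*}\circ f$ or $z^{*}\circ f$. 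You should rework your argument along these lines: replace the linearizations by a direct lower bound on a difference quotient, and replace the use of $\overline{\theta}_X(x_0^{*},\cdot)$ (which commits you to the dual side and to a specific $x_0^{*}$) by the duality passage to $\overline{\rho}_X$, which hands you the finite-codimensional subspace $X_0$ needed for Gorelik without ever choosing a functional on $X$.
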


\begin{proof} Let
$$C=\overline{\rm conv}\,\Big\{\frac{f(x)-f(x')}{\|x-x'\|},\ x\neq x' \in X\Big\}.$$
Clearly, $C$ is closed convex symmetric and $C\subset B_Y$. Let now $y\in Y$ such that $\|y\|=\frac{1}{M}$. For $t\in [0,+\infty)$, denote $x_t=f^{-1}(ty)$. We have that $\|x_1-x_0\|\le 1$ and $\|x_M-x_0\|\ge 1$. So, there exists $t\in [1,M]$ such that $\|x_t-x_0\|=1$. It follows that $ty\in C$. Since $C$ is convex and symmetric, we deduce that $\frac1M B_Y\subset C$. So, if we denote $|\ |$ the Minkowski functional of $C$, we have that $|\ |$ is an equivalent norm on $Y$ such that $\|\ \|_Y\le |\ |\le M\|\ \|_Y$. Its dual norm is given by
$$\forall y^*\in Y^*\ \ \ |y^*|=\sup\left\{\frac{\langle y^*,f(x)-f(x')\rangle}{\|x-x'\|}\ \ \ x\neq x'\right\}.$$

Let $t\in (0,1]$ and assume as we may that $\overline{\theta}_X\big(\frac{t}{4M}\big)>0$. So let $y^*\in Y^*$ such that $|y^*|=1$ and $\eta >0$. We can pick $x\neq x'\in X$ such that $$\langle y^*,f(x)-f(x')\rangle \ge (1-\eta)\|x-x'\|.$$
We may assume that $x'=-x$ and $f(x')=-f(x)$, so that we have
\begin{equation}\label{eq1}
\langle y^*,f(x)\rangle \ge (1-\eta)\|x\|.
\end{equation}
Pick $0<\delta <\overline{\theta}_X(\frac{t}{4M})$. It follows from statement (b) in Proposition \ref{Young} that $\overline{\rho}_X(\frac{4M\delta}{t})<\delta$. So, there exists a finite codimensional subspace $X_0$ of $X$ such that
\begin{equation}\label{eq2}
\forall z \in \frac{4M\delta \|x\|}{t}\,B_{X_0},\ \ \ \|x+z\|\le (1+\delta)\|x\|.
\end{equation}
Pick $b<\frac{4\delta \|x\|}{t}$. It now follows from the Gorelik principle (Theorem \ref{Gor1}) that there exists a compact subset $K$ of $Y$ such that
\begin{equation}\label{eq3}
bB_Y\subset K+f\Big(\frac{8M\delta \|x\|}{t}B_{X_0}\Big).
\end{equation}
Fix now $\eps>0$, consider a finite $\eps$-net $F$ of $K$ and denote $E$ the finite dimensional subspace of $Y$ spanned by $F \cup \{f(x)\}$. For any $z^*\in E^\perp$ such that $|z^*|=t$, we have $\|z^*\|\ge t$ and, if $\eps>0$ was initially chosen small enough, by (\ref{eq3}) we deduce that
\begin{equation}\label{eq4}
\exists z \in \frac{8M\delta \|x\|}{t}B_{X_0} \ \ \ \langle z^*, -f(z)\rangle \ge (b-\eta)t.
\end{equation}
It now follows from the fact that $|y^*|=1$ and (\ref{eq2}) that
$$\langle y^*,f(x)+f(z)\rangle=\langle y^*,f(z)-f(x')\rangle \le \|x'-z\|\le (1+\delta)\|x\|.$$
Then (\ref{eq1}) implies that $\langle y^*,f(z)\rangle \le (\delta+\eta)\|x\|$. Combining this last inequality with the fact that $\langle z^*,f(x)\rangle =0$ and (\ref{eq1}), (\ref{eq2}) and (\ref{eq3}), we obtain that
$$\langle y^*+z^*,f(x)-f(z)\rangle \ge (1-\eta)\|x\|-(\delta+\eta)\|x\|+(b-\eta)t.$$
Using again the definition of $|\ |$ and (\ref{eq2}) we get
$$|y^*+z^*| \ge \Big((1-\eta)\|x\|-(\delta+\eta)\|x\|+(b-\eta)t\Big)\Big((1+\delta)\|x\|\Big)^{-1}.$$
Letting $b$ tend to $\frac{4\delta \|x\|}{t}$ and $\eta$ tend to $0$, we deduce that
$$\overline{\theta}_{|\ |}(y^*,t)\ge \frac{1+3\delta}{1+\delta}-1\ge \delta.$$
In the above estimate, which does not depend on $y^*$ in the unit sphere of $|\ |$, we let $\delta$ tend to $\overline{\theta}_X(\frac{t}{4M})$ to conclude our proof.

\end{proof}

\begin{Cor} Let $X$ and $Y$ be two Banach spaces and assume that $f:X\to Y$ is a bijection such that $Lip\,(f)\le 1$ and $Lip\,(f^{-1})\le M$. Then there exists an equivalent norm $|\ |$ on $Y$ such that $\|\ \|_Y\le |\ |\le M\|\ \|_Y$ and
$$\forall t\in [0,1]\ \ \ \overline{\rho}_{|\ |}\big(\frac{t}{48M}\big)\le \overline{\rho}_X(t).$$
\end{Cor}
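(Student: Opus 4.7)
The plan is to combine Theorem \ref{Lipschitz-equivalence}, which controls the $\overline{\theta}$ modulus of a renorming of $Y$, with the Young-type duality in Corollary \ref{Young2} to convert this control into the desired estimate on $\overline{\rho}$. The factor $48=2\cdot 4\cdot 6$ will arise transparently as the product of constants from three successive steps.

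\medskip

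First I would apply Theorem \ref{Lipschitz-equivalence} directly to produce an equivalent norm $|\ |$ on $Y$ satisfying $\|\ \|_Y\le|\ |\le M\|\ \|_Y$ and
\[
\overline{\theta}_{|\ |}(t)\ge \overline{\theta}_X\Bigl(\frac{t}{4M}\Bigr),\qquad t\in[0,1].
\]
This is exactly the norm that will serve the conclusion; only the modulus inequality has to be re-expressed on the $\overline{\rho}$ side.

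\medskip

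Next, I would push this inequality through the dual Young function. Taking the Fenchel conjugate reverses the inequality: substituting $v=u/(4M)$ in the defining supremum and enlarging the range from $[0,1/(4M)]$ to $[0,1]$ (which can only increase the sup), one obtains
\[
(\overline{\theta}_{|\ |})^*(s)\le (\overline{\theta}_X)^*(4Ms),
\]
valid for all $s$ such that $4Ms\le 1$. This is the only slightly delicate step of the proof: one must keep track of the requirement that the arguments of both $\overline{\theta}_X$ and its dual stay in $[0,1]$, since Corollary \ref{Young2} is stated there. However all instances we need will have $s\le 1/(24M)$ and $4Ms\le 1/6\le 1$, so nothing goes wrong.

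\medskip

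Finally I would sandwich this between the two halves of Corollary \ref{Young2}. The first half, applied to $(Y,|\ |)$, yields $\overline{\rho}_{|\ |}(s/2)\le(\overline{\theta}_{|\ |})^*(s)$; specialising to $s=t/(24M)$ gives $\overline{\rho}_{|\ |}(t/(48M))\le(\overline{\theta}_{|\ |})^*(t/(24M))$. The intermediate step above turns the right hand side into $(\overline{\theta}_X)^*(t/6)$, and the second half of Corollary \ref{Young2}, applied with $s=t$, bounds this by $\overline{\rho}_X(t)$. Concatenating:
\[
\overline{\rho}_{|\ |}\Bigl(\frac{t}{48M}\Bigr)\le (\overline{\theta}_{|\ |})^*\Bigl(\frac{t}{24M}\Bigr)\le (\overline{\theta}_X)^*\Bigl(\frac{t}{6}\Bigr)\le \overline{\rho}_X(t),
\]
for every $t\in[0,1]$, which is the claimed estimate. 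The whole argument is thus a short algebraic unwinding of the theorem and the duality lemma, with the main care required in bookkeeping the ranges on which each conjugate is defined.
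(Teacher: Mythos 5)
Your proposal is correct and follows essentially the same route as the paper: apply Theorem \ref{Lipschitz-equivalence} to get the renorming, transfer the $\overline{\theta}$-inequality through the Fenchel conjugate, and close via both halves of Corollary \ref{Young2}. The paper packages the conjugate step as the general remark that $f(t)\ge g(t/C)$ implies $f^*(t/C)\le g^*(t)$ and leaves the chaining implicit, but the bookkeeping producing the factor $48M=2\cdot 4M\cdot 6$ is exactly what you wrote.
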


\begin{proof} Let $f,g$ be continuous monotone non decreasing on $[0,1]$ with $f(0)=g(0)=0$. If there exists a constant $C\ge 1$ such that for all $t\in [0,1]$, $f(t)\ge g(t/C)$, then it is clear that for all $t\in [0,1]$, $f^*(t/C)\le g^*(t)$. Since the norm $|\ |$ given by Theorem \ref{Lipschitz-equivalence} satisfies
$$\forall t\in [0,1]\ \ \ \overline{\theta}_{|\ |}(t)\ge \overline{\theta}_X\big(\frac{t}{4M}),$$
the conclusion of the proof follows now directly from Corollary \ref{Young2}.
\end{proof}

We now turn to the study of the preservation of the modulus of weak$^*$ asymptotic uniform convexity, up to renorming, under coarse Lipschitz equivalence. The following precise quantitative statement is a slight modification of Theorem 3.12 in \cite{GodefroyLancienZizler2014}, in which the proof is only very briefly outlined. It will also be crucial for us to use the details of the construction of this equivalent norm in our last section.


\begin{Thm}\label{CLequivalence} Let $X$ and $Y$ be two Banach spaces and $M>1$. Assume that $f:X\to Y$ and $g:Y\to X$ are continuous with $Lip_\infty(f)\le 1$, $Lip_\infty(g)< M$ and that there exists a constant $C\ge 0$ such that
$$\forall x\in X\ \ \|(g\circ f)(x)-x\|\le C\ \ \text{and} \ \ \forall y\in Y\ \ \|(f\circ g)(y)-y\|\le C.$$
Then for any $\eps$ in $(0,1)$, there exists an equivalent norm $|\ |$ on $Y$ such that
$$\frac{1}{1+\eps}\|\ \|_Y\le |\ |\le M\|\ \|_Y\ \ \text{and}\ \ \forall t\in [0,1]\ \ \ \overline{\theta}_{|\ |}(t)\ge \overline{\theta}_X\big(\frac{t}{48\,M^2}\big)-\eps.$$
\end{Thm}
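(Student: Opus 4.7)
My plan is to imitate the proof of Theorem~\ref{Lipschitz-equivalence}, substituting the coarse Gorelik principle (Theorem~\ref{Gor2}) for the classical one. The candidate equivalent norm on $Y$ is defined through its dual, as the Minkowski-type functional of normalised far-apart differences of $f$:
\[
|y^*| = \sup\Big\{\frac{\langle y^*,\, f(x) - f(x')\rangle}{\|x - x'\|}\ :\ \|x - x'\| \ge s\Big\},
\]
for some threshold $s > 0$ to be chosen large in terms of $\epsilon$. Only far-apart pairs contribute, so this captures only the asymptotic behaviour of $f$ --- the coarse-Lipschitz analogue of the convex-hull construction of Theorem~\ref{Lipschitz-equivalence}. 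For $s$ large, $Lip_\infty(f) \le 1$ yields $|y^*| \le (1+\epsilon)\|y^*\|$, and using that $f \circ g - Id_Y$ is bounded, $Lip_\infty(g) < M$, and the continuity of $g$, the pairs $(g(ty), g(0))$ with $t \to \infty$ (whose distances go to infinity, thanks to an intermediate-value argument) show $|y^*| > \|y^*\|/M$. Dualising yields $\frac{1}{1+\epsilon}\|\cdot\|_Y \le |\cdot| \le M\|\cdot\|_Y$.

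\textbf{Estimate of $\overline{\theta}_{|\cdot|}$.} Fix $y^*$ with $|y^*|=1$, $t \in (0,1]$, $\eta > 0$, and $0 < \delta < \overline{\theta}_X(t/(48M^2))$. Choose a near-sup pair $(x, x')$ with $\|x-x'\|$ very large and ratio $\ge 1-\eta$; by translating in both domain and codomain, reduce to $x' = -x$, $f(x') = -f(x)$, so $\langle y^*, f(x)\rangle \ge (1-\eta)\|x\|$ with $\|x\|$ still large. Proposition~\ref{Young}(b) gives $\overline{\rho}_X(48M^2\delta/t) < \delta$, so there is a finite codimensional $X_0 \subset X$ with $\|x \pm z\| \le (1+\delta)\|x\|$ for every $z \in (48M^2\delta\|x\|/t) B_{X_0}$. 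Apply Theorem~\ref{Gor2} with $\alpha = 24M^2\delta\|x\|/t$ and $\lambda$ close to $1$ (valid once $\|x\|$ is large enough that $\alpha \ge \alpha_0 = 2(C+D)/(1-\lambda)$) to get a compact $K \subset Y$ with $\tfrac{\lambda\alpha}{M} B_Y \subset K + CB_Y + f(2\alpha B_{X_0})$. With a finite $\epsilon''$-net $F$ of $K$ and $E = \mathrm{span}(F \cup \{f(x)\})$, for $z^* \in E^\perp$ with $|z^*| = t$ (so $\|z^*\| \in [t/(1+\epsilon), Mt]$), picking $y \in bB_Y$ anti-aligned with $z^*$ (for $b$ slightly less than $\lambda\alpha/M$) and decomposing $y = k + w + f(z)$ with $k \in K$, $w \in CB_Y$, $z \in 2\alpha B_{X_0}$ yields $\langle z^*, -f(z)\rangle \ge b\|z^*\| - \|z^*\|(\epsilon''+C) \ge bt/(1+\epsilon) - Mt(\epsilon''+C)$. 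Combining with $\langle y^*, f(x)\rangle \ge (1-\eta)\|x\|$, $\langle z^*, f(x)\rangle = 0$, and $\langle y^*, -f(z)\rangle \ge -(\delta+\eta)\|x\|$ (from the $|y^*|=1$ inequality applied to the pair $(x', z)$, whose distance is $\le (1+\delta)\|x\|$), and dividing by $\|x-z\| \le (1+\delta)\|x\|$:
\[
|y^* + z^*| \ge \frac{(1-\delta-2\eta)\|x\| + bt/(1+\epsilon) - Mt(\epsilon''+C)}{(1+\delta)\|x\|}.
\]

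\textbf{Conclusion and main obstacle.} Letting $\|x\| \to \infty$ kills the absolute noise $Mt(\epsilon''+C)$ in the ratio; then with $\eta, \epsilon, \epsilon'' \to 0$, $\lambda \to 1$, and $b \to 24\lambda M\delta\|x\|/t$, the ratio tends to $(1-\delta+24M\delta)/(1+\delta)$, whence $\overline{\theta}_{|\cdot|}(y^*, t) \ge (24M-2)\delta/(1+\delta) \ge \delta$ (using $M > 1$ and $\delta \le 1$). Since this holds for every $y^*$ on the unit sphere of $|\cdot|$, taking $\delta \to \overline{\theta}_X(t/(48M^2))$ and absorbing all small errors into the additive $\epsilon$ yields the claim. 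The main obstacle compared to the Lipschitz case is the extra $CB_Y$ summand in Theorem~\ref{Gor2}, contributing absolute noise of order $MtC$ independent of $\|x\|$; the freedom to take the near-sup pair with $\|x-x'\|$ arbitrarily large (thanks to the coarse nature of $|\cdot|$) makes this noise vanish in the normalised ratio, provided $s$ and the chosen $\|x\|$ are taken sufficiently large in terms of $\epsilon$, $\delta$, $t$, and the Gorelik threshold $\alpha_0$.
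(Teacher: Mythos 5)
There is a genuine gap in the choice of the near--sup pair. You define the dual norm with a \emph{single} threshold $s$, $|y^*| = \sup\{\langle y^*, f(x)-f(x')\rangle/\|x-x'\| : \|x-x'\|\ge s\}$, and then say you ``choose a near--sup pair $(x,x')$ with $\|x-x'\|$ very large and ratio $\ge 1-\eta$''; later you let $\|x\|\to\infty$ both to kill the $CB_Y$ noise coming from Theorem~\ref{Gor2} and to satisfy the Gorelik threshold $\alpha\ge\alpha_0 = 2(C+D)/(1-\lambda)$ as $\lambda\to 1$. But the quantity $R\mapsto \sup_{\|x-x'\|\ge R}\langle y^*, f(x)-f(x')\rangle/\|x-x'\|$ is non-increasing in $R$, so nothing prevents the near--maximizers for $|y^*|$ from all being concentrated near distance $s$; at larger scales the ratio may drop strictly below $|y^*|$. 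You therefore cannot, for fixed $s$, take $\|x-x'\|$ arbitrarily large while keeping the ratio $\ge 1-\eta$. The same difficulty even affects the step where you apply the defining inequality of $|\,\cdot\,|$ to the pair $(x',z)$: since $\|x'-z\|=\|x+z\|$ sits between $\|x\|$ and $(1+\delta)\|x\|$, you need $\|x-x'\|$ comfortably above $2s$, which again is not available for free. Finally, there is a circularity in ``$s$ and the chosen $\|x\|$ sufficiently large'': $s$ must be fixed before $y^*$, $t$, $\delta$, and certainly before $\|x\|$.

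This is precisely the obstruction that the paper's proof is designed to handle, and why it cannot be avoided by a direct imitation of Theorem~\ref{Lipschitz-equivalence}. The paper introduces the whole family of norms $|\,\cdot\,|_k$ with thresholds $2^k$, and the key Lemma~\ref{key} produces the estimate $|y^*+z^*|_k \ge 2|y^*|_{k+1}-|y^*|_k + \overline{\theta}_X(t/(48M^2)) - \eps/2$. The ``memory'' term $2|y^*|_{k+1}-|y^*|_k$ records exactly the possible discrepancy between the two consecutive scales, which is the quantity you implicitly assumed to be negligible. Averaging $|\,\cdot\,|_k$ over $k\in[k_1+1,k_1+N]$ makes those discrepancies telescope and contribute only $O(M/N)$, which is absorbed into $\eps$; meanwhile the Gorelik parameters can be fixed once and for all with $\lambda=1/2$ and $k_1$ chosen so that $24M^2\delta\,2^{k_1}>4(C+D)t$, avoiding your need to send $\lambda\to 1$. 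The remainder of your computation (symmetrizing to $x'=-x$, selecting $X_0$ via Proposition~\ref{Young}(b) with the additional lower bound $\|x\pm z\|\ge\|x\|$, applying Theorem~\ref{Gor2}, and the concluding arithmetic) is essentially the same as in the paper and is sound modulo this central gap.
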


\begin{proof} We will adapt the proof of Theorem 5.3 in \cite{GodefroyKaltonLancien2001}.\\
For $k\in \Ndb$, we define
$$C_k=\overline{\rm conv}\,\Big\{\frac{f(x)-f(x')}{\|x-x'\|},\ \|x-x'\|\ge 2^k\Big\}.$$
Then $(C_k)_{k=1}^\infty$ is a decreasing sequence of closed convex and symmetric subsets of $Y$. Since $Lip_\infty(f)\le 1$, we have that $C_k\subset (1+\eps_k)B_Y$, where $(\eps_k)_{k=1}^\infty$ is a sequence of positive numbers tending to 0. In particular there exists $k_0\in \Ndb$ such that
$$\forall k\ge k_0\ \ \ C_k\subset (1+\frac{\eps}{16M})B_Y\subset (1+\eps)B_Y\subset 2B_Y.$$
Fix now $k\in \Ndb$, $y\in S_Y$ and denote $y_0=f(0)$. It follows easily from our assumptions that $\lim_{t\to \infty} \|g(ty)\|=\infty$. Recall also that $f(g(ty))=ty +u_t$, with $\|u_t\|\le C$. So, for $t$ large enough
$$\frac{f(g(ty))-y_0}{\|g(ty)\|}=\frac{ty}{\|g(ty)\|}+\frac{u_t-y_0}{\|g(ty)\|}\in C_k.$$
It follows from the assumption that $Lip_\infty(g)<M$ that there exist $\alpha\ge \frac1M$ and a sequence $(t_n)_n$ tending to $+\infty$ such that $\frac{t_n}{\|g(t_ny)\|}$ tends to $\alpha$. Since $C_k$ is closed, we obtain that $\alpha y\in C_k$. Finally, we use the fact that $C_k$ is convex and symmetric to deduce that $\frac1M y\in C_k$ and thus that $\frac1M B_Y \subset C_k$.\\
So, if we denote $|\ |_k$ the Minkowski functional of $C_k$, we have that for all $k\ge k_0$, $|\ |_k$ is an equivalent norm on $Y$ such that $(1+\frac{\eps}{16M})^{-1}\|\ \|_Y\le |\ |_k\le M\|\ \|_Y$. It will be useful to describe the dual norm of $|\ |_k$, also denoted $|\ |_k$, as follows
$$\forall y^*\in Y^*\ \  |y^*|_k =\sup \left\{ \frac{\langle y^*,f(x)-f(x')\rangle}{\|x-x'\|},\quad
x,x'\in X,\ \|x-x'\|\ge 2^k\right\}.$$
Note that our assumptions also imply the existence of $D\ge 0$ such that
\begin{equation}\label{eq6}
\forall y,y'\in Y \ \ \|g(y)-g(y')\|\le D+ M\|y-y'\|.
\end{equation}
This will enable us to apply the Gorelik principle as it is stated in Theorem \ref{Gor2}.

\medskip
The key lemma is the following.

\begin{Lem}\label{key} Let $t\in (0,1]$ and assume that $\overline{\theta}_X\big(\frac{t}{48M^2}\big)>0$. Let $y^*\in Y^*$ such that $ \|y^*\|\le M$,  $\eps>0$ and $k_1\in \Ndb$ such that
$$k_1\ge k_0,\ \ \ 24M^2\overline{\theta}_X\big(\frac{t}{48M^2}\big)2^{k_1}> 4(C+D)t\ \ \ {\text and}\ \ \ 2^{-k_1}(CM+1)\le \frac{\eps}{8}.$$
Then there exists a finite dimensional subspace $E$ of $Y$ so that for all $k\ge k_1$ and all $z^*\in E^\perp$ such that $\frac{t}{2}\le \|z^*\|\le tM$, we have
\begin{equation}\label{eq7}
|y^*+z^*|_k\ge 2|y^*|_{k+1}-|y^*|_k+\overline{\theta}_X\big(\frac{t}{48M^2}\big)-\frac{\eps}{2}.
\end{equation}
\end{Lem}

\begin{proof} Let $\eta =\frac{\eps}{16M}$ and
pick $0<\delta <\overline{\theta}_X(\frac{t}{48M^2})$ such that
\begin{equation}\label{eq8.0}
24M^2\delta2^{k_1}>4(C+D)t.
\end{equation}
Let $k\ge k_1$ and choose $x\neq x'\in X$ such that $\|x-x'\|\ge 2^{k+1}$ and
$$\langle y^*,f(x)-f(x')\rangle \ge (1-\eta)|y^*|_{k+1}\|x-x'\|.$$
We may assume that $x'=-x$ and $f(x')=-f(x)$, so that we have
\begin{equation}\label{eq8}
\langle y^*,f(x)\rangle \ge (1-\eta)|y^*|_{k+1}\|x\|.
\end{equation}
Since $0<\delta <\overline{\theta}_X(\frac{t}{48M^2})$. It follows from statement (b) in Proposition \ref{Young} that $\overline{\rho}_X(\frac{48M^2\delta}{t})<\delta$. So, there exists a finite codimensional subspace $X_0$ of $X$ such that
\begin{equation}\label{eq9}
\forall z \in \frac{48M^2\delta \|x\|}{t}\,B_{X_0}\ \ \ \|x+z\|\le (1+\delta)\|x\|\ \ {\rm and}\ \ \|x+z\|\ge \|x\|\ge 2^k.
\end{equation}
From (\ref{eq6}), (\ref{eq8.0}) and Theorem \ref{Gor2}, applied with $\lambda=\frac12$ and $\alpha=t^{-1}24M^2\delta\|x\|$, we infer the existence of a compact subset $K$ of $Y$ such that
$$\frac{12M\delta\|x\|}{t}B_Y\subset K+CB_Y+f\Big(\frac{48M^2\delta\|x\|}{t}B_{X_0}\Big).$$
As in the previous proof, fix $\eta'>0$, pick a finite $\eta'$-net $F$ of $K$ and let $E$ be the linear span of $F\cup\{f(x)\}$. Let now $z^*\in E^\perp$ such that $\frac{t}{2} \le \|z^*\|\le tM$. Then,
\begin{equation}\label{eq10}
\exists z \in \frac{48M^2\delta\|x\|}{t}B_{X_0}\ \ \ \langle z^*,-f(z)\rangle \ge 6M\delta \|x\| -(CM+1),
\end{equation}
if $\eta'$ was initially chosen small enough.\\
We then deduce from (\ref{eq9}) that
\begin{equation}
\langle y^*,f(x)+f(z)\rangle=\langle y^*,f(z)-f(x')\rangle \le |y^*|_k\|x'-z\|\le (1+\delta)|y^*|_k\|x\|.
\end{equation}
Thus, combining the above informations we get
$$\langle y^*+z^*,f(x)-f(z)\rangle \ge \Big(2(1-\eta)|y^*|_{k+1}-(1+\delta)|y^*|_k+6M\delta\Big)\|x\|-(CM+1).$$
Using again (\ref{eq9}) we then have
$$|y^*+z^*|_k\ge \frac{1}{1+\delta}\Big(2(1-\eta)|y^*|_{k+1}-(1+\delta)|y^*|_k+6M\delta -
2^{-k_1}(CM+1)\Big).$$
So, it follows from our initial choice of $k_1$ and the fact that $\delta\le 1$ and $M\ge 1$ that
$$|y^*+z^*|_k\ge2(1-\eta)(1-\delta)|y^*|_{k+1}-|y^*|_k+(2M+1)\delta-\frac{\eps}{8}.$$
Note that $\|y^*\|\le M$ implies that $|y^*|_{k+1}\le (1+\frac{\eps}{16M})M$ and recall that $\eta=\frac{\eps}{16M}$. So we obtain that
$$|y^*+z^*|_k\ge2(1-\eta)|y^*|_{k+1}-|y^*|_k-2M\delta+(2M+1)\delta-\frac{\eps}{8}.$$
Then with our choice of $\eta$ implies that
$$|y^*+z^*|_k\ge 2|y^*|_{k+1}-|y^*|_k+\delta-\frac{3\eps}{8}.$$
So, if $\delta$ was initially chosen close enough to $\overline{\theta}_X(\frac{t}{48M^2})$, we obtain
$$|y^*+z^*|_k\ge 2|y^*|_{k+1}-|y^*|_k+\overline{\theta}_X\big(\frac{t}{48M^2}\big)-\frac{\eps}{2}.$$

\end{proof}

\noindent {\it End of proof of Theorem \ref{CLequivalence}.} Note that a simple convexity argument shows that for any space $Z$, the function $t\mapsto t^{-1}\overline{\theta}_Z(t)$ is increasing on $(0,1]$.

Assume first that $\overline{\theta}_X\big(\frac{1}{48M^2}\big)\le\frac{\eps}{2}.$
Then for any $t \in (0,1]$ we have that
$$\overline{\theta}_Y(t)\ge 0> \overline{\theta}_X\big(\frac{1}{48M^2}\big)-\eps\ge \overline{\theta}_X\big(\frac{t}{48M^2}\big)-\eps,$$
and the original norm on $Y$ works.

Assume now that $\overline{\theta}_X\big(\frac{1}{48M^2}\big)>\frac{\eps}{2}.$ Since $\overline{\theta}_X$ is continuous, there exists $t_0\in (0,1)$ so that $\overline{\theta}_X\big(\frac{t_0}{48M^2}\big)=\frac{\eps}{2}.$ As above, we easily have that for any equivalent norm $N$ on $Y$ and any $t\in (0,t_0]$, $\overline{\theta}_N(t)\ge 0\ge \overline{\theta}_X\big(\frac{t}{48M^2}\big)-\eps$. So we only have to treat the problem for $t\in [t_0,1]$. Let us pick $k_1\in \Ndb$ satisfying the assumptions of Lemma \ref{key} for $t_0$. It then follows from the monotonicity of $t\mapsto t^{-1}\overline{\theta}_X(t)$ that the conclusion of Lemma \ref{key} applies for any $t\in [t_0,1]$ and any $k\ge k_1$.\\
Pick now $N\in \Ndb$ such that $\frac{4M}{N}<\frac{\eps}{2}$ and define
$$|y^*|=\frac1N \sum_{k=k_1+1}^{k_1+N}|y^*|_k$$ which is a dual norm on $Y^*$ with $$M^{-1}\|y^*\|\le |y^*|\le (1+\frac{\eps}{16M})\|y^*\| \le (1+\eps)\|y^*\|\le 2 \|y^*\|.$$
Let $y^*\in Y^*$, with $|y^*|=1$. It follows from Lemma \ref{key} that for any $t\in [t_0,1]$, there exists a finite dimensional subspace $E$ of $Y$ so that for all $k\in [k_1,k_1+N]$ and all $z^*\in E^\perp$ such that $|z^*|=t$, we have
$$|y^*+z^*|_k\ge 2|y^*|_{k+1}-|y^*|_k+\overline{\theta}_X\big(\frac{t}{48M^2}\big)-\frac{\eps}{2},$$
which implies, summing over $k$, that
$$|y^*+z^*|\ge |y^*|+\frac{2}{N}\big(|y^*|_{k_1+N+1}-|y^*|_{k_1+1}\big)+
\overline{\theta}_X\big(\frac{t}{48M^2}\big)-\frac{\eps}{2}.$$
Since $|y^*|=1$, we have that $\|y^*\|\le M$ and $|y^*|_{k+1}\le 2M$. So
$$|y^*+z^*|\ge |y^*|+\overline{\theta}_X\big(\frac{t}{48M^2}\big)-\frac{\eps}{2}-\frac{4M}{N}\ge |y^*|+\overline{\theta}_X\big(\frac{t}{48M^2}\big)-\eps.$$
This shows that for all $t\in [t_0,1]$, $\overline{\theta}_{|\ |}(t)\ge \overline{\theta}_X\big(\frac{t}{48M^2}\big)-\eps$ and concludes our proof.

\end{proof}

\begin{Cor}\label{aus} Let $X$ and $Y$ be two Banach spaces and $M>1$. Assume that $f:X\to Y$ and $g:Y\to X$ are continuous with $Lip_\infty(f)\le 1$, $Lip_\infty(g)< M$ and that there exists a constant $C\ge 0$ such that
$$\forall x\in X\ \ \|(g\circ f)(x)-x\|\le C\ \ {\rm and} \ \ \forall y\in Y\ \ \|(f\circ g)(y)-y\|\le C.$$
Then for any $\eps$ in $(0,1)$, there exists an equivalent norm $|\ |$ on $Y$ such that
$$\frac{1}{1+\eps}\|\ \|_Y\le |\ |\le M\|\ \|_Y\ \ {\rm and}\ \ \forall t\in [0,1]\ \ \ \overline{\rho}_{|\ |}\big(\frac{t}{576\,M^2}\big)\le \overline{\rho}_X(t)+\eps.$$
\end{Cor}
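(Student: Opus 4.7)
The plan is to deduce this corollary from Theorem \ref{CLequivalence} in exactly the same fashion as the previous corollary was deduced from Theorem \ref{Lipschitz-equivalence}, namely by invoking the Young duality between $\overline{\theta}$ and $\overline{\rho}$ recorded in Corollary \ref{Young2}.

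First I would apply Theorem \ref{CLequivalence} with the given $\eps\in(0,1)$. This produces an equivalent norm $|\ |$ on $Y$ such that $(1+\eps)^{-1}\|\ \|_Y\le |\ |\le M\|\ \|_Y$ and
$$\overline{\theta}_{|\ |}(t)\ge \overline{\theta}_X\!\Big(\frac{t}{48M^2}\Big)-\eps,\qquad t\in[0,1].$$

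Next I would record the following mild extension of the elementary fact used in the proof of the corollary following Theorem \ref{Lipschitz-equivalence}: if $\varphi,\psi$ are continuous non-decreasing on $[0,1]$ with $\varphi(0)=\psi(0)=0$, $C\ge 1$ and $\varphi(t)\ge \psi(t/C)-\eps$ for every $t\in[0,1]$, then the change of variable $u=t/C$ in the defining supremum of the Young conjugate gives
$$\varphi^*(s/C)\le \psi^*(s)+\eps,\qquad s\in[0,1].$$
Applied with $\varphi=\overline{\theta}_{|\ |}$, $\psi=\overline{\theta}_X$ and $C=48M^2$, this yields
$$(\overline{\theta}_{|\ |})^*\!\Big(\frac{s}{48M^2}\Big)\le (\overline{\theta}_X)^*(s)+\eps,\qquad s\in[0,1].$$

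Finally I would chain this estimate with the two inequalities of Corollary \ref{Young2}. Applied to $(Y,|\ |)$, its first inequality gives $\overline{\rho}_{|\ |}(t/(576M^2))\le (\overline{\theta}_{|\ |})^*(t/(288M^2))$; the conjugate estimate above (with $s=t/6$) then bounds the right-hand side by $(\overline{\theta}_X)^*(t/6)+\eps$; and its second inequality, applied to $X$, bounds $(\overline{\theta}_X)^*(t/6)$ by $\overline{\rho}_X(t)$. Combining these three steps,
$$\overline{\rho}_{|\ |}\!\Big(\frac{t}{576M^2}\Big)\le \overline{\rho}_X(t)+\eps,\qquad t\in[0,1],$$
which is the desired estimate. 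The only obstacle here is bookkeeping: one has to track how the constant $48M^2$ picks up an extra factor $12=2\times 6$ through the two applications of Young duality, and carry the additive error $\eps$ undisturbed through these steps.
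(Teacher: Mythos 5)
Your proof is correct and is essentially the same as the paper's: apply Theorem \ref{CLequivalence}, then the elementary Young-conjugate comparison (with the additive $\eps$ carried through), then chain the two inequalities of Corollary \ref{Young2} to pass from $\overline{\theta}$ back to $\overline{\rho}$. The intermediate constants ($576M^2 = 2 \cdot 288M^2$ and $288M^2 = 6 \cdot 48M^2$) match those in the paper's one-line chain of inequalities exactly.
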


\begin{proof} Let $\varphi,\psi$ be continuous monotone non decreasing on $[0,1]$ with $\varphi(0)=\psi(0)=0$. If there exists $D\ge 1$ and $\eps >0$ such that for all $t\in [0,1]$, $\varphi(t)\ge \psi(t/D)-\eps$, then it is clear that for all $t\in [0,1]$, $\varphi^*(t/D)\le \psi^*(t)+\eps$. Then we can apply Corollary \ref{Young2} to get that if $|\ |$ is the norm given by Theorem \ref{CLequivalence}, then for all $t\in [0,1]$:
$$\overline{\rho}_{|\ |}\big(\frac{t}{576\,M^2}\big)\le
(\overline{\theta}_{|\ |})^*\big(\frac{t}{288\,M^2}\big)\le
(\overline{\theta}_{X})^*\big(\frac{t}{6}\big)+\eps \le
\overline{\rho}_X(t)+\eps.$$
\end{proof}


\section{Application to norm attaining coarse Lipschitz maps}\label{nacl}

In this section, we will extend to the setting of coarse Lipschitz maps and equivalences, the results obtained by G. Godefroy in \cite{Godefroy2016} on norm attaining Lipschitz maps. Our first result is the analogue of Theorem 3.2 of \cite{Godefroy2016}.

\begin{Thm}\label{NA} Let $X$ and $Y$ be two Banach spaces and $M>1$. Assume that $f:X\to Y$ and $g:Y\to X$ are continuous with $Lip_\infty(f)=1$, $Lip_\infty(g)< M$ and that there exists a constant $C\ge 0$ such that
$$\forall x\in X\ \ \|(g\circ f)(x)-x\|\le C\ \ {\rm and} \ \ \forall y\in Y\ \ \|(f\circ g)(y)-y\|\le C.$$
Assume also that $f$ attains its norm $\|f\|_{CL}=1$ in the direction $y\in S_Y$. Then
$$\forall t\in (0,1]\ \ \    \overline{\rho}_{Y}\big(y,\frac{t}{576M^3}\big)\le \overline{\rho}_X(t).$$
\end{Thm}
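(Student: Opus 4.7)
The plan is to carry out a pointwise variant of the proof of Theorem~\ref{CLequivalence} at the specific direction $y$, extracting from the norm-attainment hypothesis an extra factor $M$ that ultimately converts the scaling $1/(576M^2)$ of Corollary~\ref{aus} into $1/(576M^3)$. First, use Hahn--Banach to pick $y^* \in S_{Y^*}$ with $\langle y^*, y\rangle = 1$. The norm-attaining pairs $(s_n, t_n)$ satisfy $\|t_n - s_n\| \to \infty$ and $(f(t_n) - f(s_n))/\|t_n - s_n\| \to y$, so testing against $y^*$ yields $\langle y^*, f(t_n) - f(s_n)\rangle/\|t_n - s_n\| \to 1$. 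Combined with $C_k \subset (1 + \eps_k)B_Y$ (where $\eps_k \to 0$) and $y \in \bigcap_k C_k$ (from the convergence and the closedness of each $C_k$), this forces the Minkowski functionals $|\cdot|_k$ from the proof of Theorem~\ref{CLequivalence} to satisfy $|y^*|_k \in [1, 1+\eps_k]$ and $|y|_k \in [(1+\eps_k)^{-1}, 1]$.

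Next, I would rerun the argument of Lemma~\ref{key} pointwise at this specific $y^*$, replacing the arbitrary near-maximizer of $\langle y^*, f(x) - f(x')\rangle/\|x - x'\|$ by the concrete pair $x = t_n$, $x' = s_n$ for $n$ large enough that $\|t_n - s_n\| \geq 2^{k+1}$ and $\langle y^*, f(t_n) - f(s_n)\rangle \geq (1-\eta)\|t_n - s_n\|$. After the standard translation of coordinates making $x' = -x$ and $f(x') = -f(x)$, the Gorelik step (Theorem~\ref{Gor2}) proceeds verbatim. The decisive new input is that the effective slope along this pair is $1$ rather than the intrinsic $|y^*|_{k+1}$; redoing the final arithmetic of Lemma~\ref{key} with this sharpening yields a gain of $(2M - 1)\delta \geq M\delta$ in place of the original $\delta$. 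The outcome is the pointwise bound
\[
|y^* + z^*|_k \geq 1 + M\,\overline{\theta}_X\bigl(t/(48M^2)\bigr) - \eps
\]
valid for $k$ sufficiently large and every $z^* \in E^\perp$ in the relevant $|\cdot|_k$-range.

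To finish, normalise $\tilde y^* = y^*/|y^*|_k$ and $\tilde y = y/|y|_k$: the pair $(\tilde y^*, \tilde y)$ is approximately Hahn--Banach-dual in $|\cdot|_k$, since $|y^*|_k \cdot |y|_k \in [1, 1+\eps_k]$. Applying a pointwise version of Corollary~\ref{Young2} then converts the above estimate into $\overline{\rho}_{|\cdot|_k}(\tilde y, u) \leq M\,\overline{\rho}_X(576 Mu) + \eps'$. The norm comparison $(1+\eps_k)^{-1}\|\cdot\|_Y \leq |\cdot|_k \leq M\|\cdot\|_Y$ transports this back to $\overline{\rho}_Y(y, u) \leq M\,\overline{\rho}_X(576M^2 u) + \eps'$, the extra factor $M$ coming from $|z|_k \leq M$ for $z \in S_Y$. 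Substituting $u = t/(576M^3)$ and using the standard fact that $t \mapsto \overline{\rho}_X(t)/t$ is non-decreasing on $(0,1]$ (so $\overline{\rho}_X(t/M) \leq \overline{\rho}_X(t)/M$) absorbs the spare factor $M$ and produces $\overline{\rho}_Y(y, t/(576M^3)) \leq \overline{\rho}_X(t) + \eps'$; letting $\eps' \to 0$ completes the proof.

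The main obstacle is the careful constant tracking, especially ensuring that the pointwise version of Young duality applies at the approximate Hahn--Banach pair $(\tilde y^*, \tilde y)$ despite the defect $\langle \tilde y^*, \tilde y\rangle \in [(1+\eps_k)^{-1}, 1]$, and that the range of $|z^*|_k$ delivered by the Gorelik step contains all dual sphere elements of the size required to evaluate $\overline{\theta}_{|\cdot|_k}(\tilde y^*, \cdot)$. Conceptually, the improvement over Theorem~\ref{CLequivalence} comes from the fact that norm-attainment lets us replace the intrinsic bound $|y^*|_{k+1} \leq \|y^*\|$ by the sharper asymptotic value~$1$ along the distinguished pair $(s_n, t_n)$; this gain of a factor~$M$, combined with the monotonicity of $\overline{\rho}_X(t)/t$, produces the sharper scaling $1/(576M^3)$.
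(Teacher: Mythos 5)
Your proposal takes a genuinely different and substantially more complicated route than the paper, and it contains a real gap. The paper's proof is short: it observes that norm attainment forces $|y|\le 1$ (because $y\in\bigcap_k C_k$), so $|y|$ is essentially $1$; then it applies the \emph{global} estimate of Corollary~\ref{aus} to bound $\overline\rho_{|\ |}(u,\cdot)\le\overline\rho_{|\ |}(\cdot)$ at $u=y/|y|$, and finally transfers the estimate from the renorming $|\ |$ to the original $\|\ \|_Y$ using the two-sided comparison $\frac{1}{1+\eps}\|\ \|_Y\le|\ |\le M\|\ \|_Y$. The passage from a $\|\ \|_Y$-ball of radius $t/(576M^3)$ in the finite-codimensional subspace to a $|\ |$-ball of radius $t/(576M^2)$ is exactly where the extra factor $M$ appears, and no sharpening of Lemma~\ref{key} or any pointwise dual estimate is needed.

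Your route instead passes through a Hahn--Banach functional $y^*$ for $y$ with respect to $\|\ \|_Y$, a pointwise rerun of Lemma~\ref{key} at $y^*$ exploiting $|y^*|_k\approx 1$ to win a factor $(2M-1)$ in front of $\delta$, and then a ``pointwise version of Corollary~\ref{Young2}'' at the approximately norming pair $(\tilde y^*,\tilde y)$. The arithmetic sharpening of Lemma~\ref{key} is plausible but not worked out; more importantly, the decisive step — converting a lower bound on $\overline\theta_{|\ |_k}(\tilde y^*,\cdot)$ into an upper bound on $\overline\rho_{|\ |_k}(\tilde y,\cdot)$ for a pair that is only approximately norming — is neither in the paper nor an immediate consequence of Proposition~\ref{Young} or Corollary~\ref{Young2}, which are stated globally (supremum over $x\in S_X$, infimum over $x^*\in S_{X^*}$). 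You flag this yourself as ``the main obstacle,'' but the proof does not resolve it, and it is the crux. The pointwise moduli have opposite quantifier orderings (infimum over finite-codimensional subspaces versus supremum over finite-dimensional subspaces), so a pointwise Young-type inequality for a norming pair is a genuine lemma that requires its own argument, and the defect $\langle\tilde y^*,\tilde y\rangle<1$ only makes this harder. In addition, the factor-$M$ gain you engineer is in fact unnecessary: the paper attains the same constant $M^3$ purely from the norm comparison, so your extra work buys nothing at the price of an unproved duality step. If you want to salvage this route, you would first need to state and prove a pointwise analogue of Corollary~\ref{Young2} valid for approximately norming pairs; but the far cleaner path, and the one the paper takes, is to keep the duality global (via Corollary~\ref{aus}) and push the direction-specific information entirely into the elementary facts that $y\in\bigcap_k C_k$ and $\|\ \|_Y\le(1+\eps)|\ |$.
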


\begin{proof} Let us fix $\eps$ in $(0,1)$ and denote $|\ |$ the norm constructed in Theorem \ref{CLequivalence}.\\
There exists sequences $(x_n)_{n=1}^\infty$, $(x_n')_{n=1}^\infty$ in $X$ such that
$$\lim_{n\to \infty}\|x_n-x_n'\|= \infty\ \ \ {\rm and}\ \ \  \lim_{n\to \infty}\frac{f(x_n)-f(x_n')}{\|x_n-x_n'\|} = y.$$
Note that for any $k$ in $\Ndb$, $\frac{f(x_n)-f(x_n')}{\|x_n-x_n'\|}\in C_k$ for $n$ large enough. So $y\in C_k$ and $|y|_k\le 1$. Therefore,  $|y|\le 1$. On the other hand $|y|\ge (1+\eps)^{-1}\ge 1-\eps$.\\
Denote $u=\frac{y}{|y|}$. It follows from Corollary \ref{aus} that $\overline{\rho}_{|\ |}\big(u,\frac{t}{576M^2}\big)\le \overline{\rho}_X(t)+\eps.$\\
Then there exists a finite codimensional subspace $E$ of $Y$ such that for all $v\in E$ with $|v|\le \frac{t}{576M^2}$, we have
$$|u+v|\le 1+ \overline{\rho}_X(t)+2\eps.$$
It follows that for all $v\in E$ with $\|v\|\le \frac{t}{576M^3}$,
$$\|y+v\|\le (1+\eps)(|y-u|+|u+v|)\le (1+\eps)(1+\overline{\rho}_X(t)+3\eps).$$
Since $\eps>0$ is arbitrary in the above inequality, this concludes our proof.
\end{proof}

Let us now recall that a Banach space $X$ has the {\it Kadets-Klee property} if the norm and weak topologies coincide on the unit sphere of $X$. The following corollaries are the coarse Lipschitz analogues of Corollary 3.5 in \cite{Godefroy2016}.

\begin{Cor}\label{notNA} Let $X$ and $Y$ be two infinite dimensional Banach spaces such that $X$ is asymptotically uniformly flat and $Y$ has the Kadets-Klee property and assume that $f:X\to Y$ is a coarse Lipschitz equivalence. Then $f$ does not attain its norm in any direction in $S_Y$.
\end{Cor}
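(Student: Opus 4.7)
The proof proceeds by contradiction. Suppose that $f$ attains its norm in some direction $y\in S_Y$. By Corollary \ref{homogeneity} we may rescale so that $Lip_\infty(f)=1$, and by Proposition \ref{CLE} we may replace $f$ by a continuous representative $\varphi$ of its class in $CL(X,Y)$ paired with a continuous ``coarse Lipschitz inverse'' $\psi:Y\to X$ satisfying the hypotheses of Theorem \ref{NA} for some $M>1$. The remark following the definition of norm attainment in Section \ref{clm} ensures that $\varphi$ still attains its norm in direction $y$, so Theorem \ref{NA} applies and yields $\overline{\rho}_Y(y,t/(576M^3))\le \overline{\rho}_X(t)$ for every $t\in(0,1]$. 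Since $X$ is asymptotically uniformly flat, there is $t_0\in(0,1]$ with $\overline{\rho}_X\equiv 0$ on $[0,t_0]$; setting $s_0=t_0/(576M^3)>0$ we deduce $\overline{\rho}_Y(y,s_0)=0$: for every $\eta>0$ there is a finite codimensional subspace $Z_\eta\subset Y$ such that $\|y+s_0 v\|\le 1+\eta$ for every $v\in S_{Z_\eta}$.

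The contradiction with the Kadets-Klee property will come from exhibiting a sequence $(z_n)\subset S_Y$ converging weakly to $y$ with $\liminf_n\|z_n-y\|>0$. Fix a norming functional $y^*\in S_{Y^*}$ of $y$, so that $y^*(y)=1$. After a standard separable reduction (passing to a separable closed subspace $Y_0\subset Y$ with separable dual that contains $y$, the witnesses of the attainment, and countably many plateau witnesses), we may assume $Y^*$ itself is separable. Fix a norm-dense sequence $(\phi_k)_{k\in\Ndb}$ in $Y^*$ and, for each $n\in\Ndb$, choose $v_n$ in the unit sphere of the finite codimensional subspace $Z_{1/n}\cap\ker y^*\cap\bigcap_{k\le n}\ker\phi_k$ (nonempty since $Y$ is infinite dimensional). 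Define $z_n=(y+s_0v_n)/\|y+s_0 v_n\|\in S_Y$.

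Applying $y^*$ to $y+s_0 v_n$ gives $y^*(y+s_0 v_n)=1$, so $\|y+s_0v_n\|\in[1,1+1/n]$, and the reverse triangle inequality yields
$$\|z_n-y\|\ge \frac{s_0-1/n}{1+1/n},$$
hence $\liminf_n\|z_n-y\|\ge s_0>0$. By construction $\phi_k(v_n)=0$ whenever $n\ge k$, and norm density of $(\phi_k)$ together with $\|v_n\|=1$ forces $\phi(v_n)\to 0$ for every $\phi\in Y^*$, that is, $v_n\to 0$ weakly; consequently $z_n\to y$ weakly. This contradicts the Kadets-Klee property of $Y$. The main delicate step is the separable reduction, needed so that the diagonal construction produces a genuinely weakly null sequence rather than merely a net; every other step is an immediate consequence of Theorem \ref{NA} and the definitions.
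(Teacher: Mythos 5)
Your reasoning up to $\overline{\rho}_Y(y,s_0)=0$ is sound and tracks the paper. The gap is the separable reduction: you assert, as if routine, that one may pass to a separable closed subspace $Y_0\subset Y$ with separable dual containing $y$ and the rest of your data. This is not automatic. A Banach space with the Kadets-Klee property need not contain any infinite-dimensional separable subspace with separable dual; such subspaces exist around every separable subset precisely when $Y$ is an Asplund space. In the situation at hand $Y$ \emph{is} Asplund, because $Y$ is coarse Lipschitz equivalent to the asymptotically uniformly flat $X$, so Corollary \ref{aus} gives $Y$ an equivalent AUS norm, and a space with an AUS renorming is Asplund; but you never invoke this, and without it the reduction has no basis. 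You would also need to record, even if briefly, that $\overline{\rho}_{Y_0}(y,s_0)\le\overline{\rho}_Y(y,s_0)=0$ (finite-codimensional subspaces of $Y$ meet $Y_0$ in finite-codimensional subspaces of $Y_0$), that $y^*|_{Y_0}$ still norms $y$, and that $Y_0$ inherits the Kadets-Klee property, so that the contradiction really lives in $Y_0$.

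The reduction is also an avoidable detour. The Kadets-Klee property as stated in the paper is the coincidence of the weak and norm topologies on $S_Y$, a statement about nets, not sequences; and $\overline{\rho}_Y(y,s_0)=0$ already supplies, for every $\eta>0$ and every finite $F\subset Y^*$, a vector $v_{(F,\eta)}$ with $\phi(v_{(F,\eta)})=0$ for all $\phi\in F$, $\|v_{(F,\eta)}\|=s_0$, and $\|y+v_{(F,\eta)}\|\le 1+\eta$. Ordering the indices $(F,\eta)$ by inclusion and refinement produces a weakly null net, and normalizing $y+v_{(F,\eta)}$ yields a net in $S_Y$ that converges weakly to $y$ while staying norm-bounded away from $y$. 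That is exactly the paper's argument, and it needs no separability assumption whatsoever. So your sequential route can be salvaged, but it requires the extra (and nontrivial) ingredient that $Y$ is Asplund, which the direct net argument sidesteps entirely.
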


\begin{proof} Assume on the contrary that $f:X\to Y$ is a coarse Lipschitz equivalence and that $f$ attains its norm in the direction $y\in S_Y$. Assume also, as we may by Corollary \ref{homogeneity}, that $\|f\|_{CL}=1$. Since $X$ is asymptotically uniformly flat, it follows from Theorem \ref{NA} that there exits $t_0>0$ so that for all $t\in [0,t_0]$, $\overline{\rho}_Y(y,t)=0$. Consider now a weakly null net $(y_\alpha)_{\alpha \in A}$ in $Y$ such that $\|y_\alpha\|= t_0$ for all $\alpha$ in $A$. Then $(\|y+y_\alpha\|)_{\alpha \in A}$ tends to 1, which  contradicts the assumption that $Y$ has the Kadets-Klee property. \end{proof}

\begin{Cor} There exists a pair of Banach spaces $(X,Y)$ such that the norm attaining coarse Lipschitz maps are not dense in $CL(X,Y)$.
\end{Cor}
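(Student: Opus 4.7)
The plan is to exhibit an explicit pair $(X,Y)$ together with a coarse Lipschitz equivalence $f:X\to Y$ which, by Corollary \ref{notNA}, attains its norm in no direction, and then to use Proposition \ref{open} to promote $f$ to an open neighborhood in $CL(X,Y)$ consisting entirely of coarse Lipschitz equivalences. Since every element of such a neighborhood will again be non-norm-attaining by Corollary \ref{notNA}, the neighborhood is disjoint from the set of norm-attaining coarse Lipschitz maps, which gives the required failure of density.

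For the concrete example I would take $X=c_0$ with its canonical supremum norm, and $Y$ the same underlying vector space equipped with an equivalent locally uniformly rotund norm $|\,\cdot\,|$ (for instance Day's norm). Then $X$ is asymptotically uniformly flat --- one verifies directly that $\overline{\rho}_X(t)=0$ for all $t\in[0,1]$ --- while $Y$, being LUR, has the Kadets--Klee property. The identity map $I:X\to Y$ is a Lipschitz isomorphism, hence a coarse Lipschitz equivalence, and Corollary \ref{notNA} guarantees that $I$ fails to attain its norm in every direction $y\in S_Y$.

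To apply Proposition \ref{open} around $I$, I need $(X,Y)$ and $(Y,X)$ to share the $\mu$-LRP for some $\mu\ge 1$. Since $X$ and $Y$ have the same underlying vector space (the two norms being merely equivalent), Proposition \ref{lipschitzrep} reduces this to the net extension property for the pair $(c_0,c_0)$, which can be verified by choosing $\cal M$ to be a maximal $1$-separated subset of $c_0$ (e.g.\ the finitely supported integer sequences) and invoking classical Lipschitz extension results for $c_0$-valued maps on nets of $c_0$.

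Once the LRP is in place, Proposition \ref{open} yields $\varepsilon>0$ such that every $f\in CL(X,Y)$ with $\|f-I\|_{CL}<\varepsilon$ is a coarse Lipschitz equivalence, and Corollary \ref{notNA} then precludes such $f$ from attaining its norm. The ball of radius $\varepsilon$ around $I$ is thus disjoint from the norm-attaining coarse Lipschitz maps, completing the argument. The main technical obstacle I expect is the verification of the $\mu$-LRP hypothesis for $(c_0,c_0)$; the remainder is a direct synthesis of Corollary \ref{notNA}, Proposition \ref{open}, and the scale-invariance provided by Corollary \ref{homogeneity}.
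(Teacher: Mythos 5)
Your proposal follows essentially the same route as the paper: take $X=(c_0,\|\cdot\|_\infty)$, $Y=c_0$ renormed with a Kadets--Klee norm, note that the identity is a coarse Lipschitz equivalence, apply Corollary \ref{notNA} to rule out norm attainment, and then use Propositions \ref{lipschitzrep} and \ref{open} to surround the identity with an open ball of coarse Lipschitz equivalences in $CL(X,Y)$. The one place you flag as a potential obstacle --- verifying the net extension property --- is handled in the paper simply by recalling that $c_0$ is an absolute Lipschitz retract (see Chapter 1 of \cite{BL}), which immediately gives the Lipschitz extension property for $Y$-valued maps from any subset of $X$ regardless of the particular net chosen; this is the ``classical Lipschitz extension result'' you allude to, and it closes the gap you anticipated.
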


\begin{proof} Consider $X=(c_0,\|\ \|_\infty)$ and $Y=(c_0,\|\ \|_Y)$, where $\|\ \|_Y$ is an equivalent norm on $c_0$ with the Kadets-Klee property. We recall that such an equivalent norm exits on any separable Banach space (see for instance the book \cite{DevilleGodefroyZizler1993} and references therein). Moreover $X$ is clearly asymptotically uniformly flat. Since $X$ is an absolute retract (see Example 1.5 of Chapter 1 in \cite{BenyaminiLindenstrauss2000}), we have in particular that $(X,Y)$ and $(Y,X)$ have the net extension property. Therefore, by Propositions \ref{lipschitzrep} and \ref{open}, $\cal C \cal L \cal E(X,Y)$ can be viewed as an open subset of $CL(X,Y)$. Since it contains the identity map on $c_0$, it is a non empty open subset of $CL(X,Y)$. Combining this with Corollary \ref{notNA} finishes our proof.
\end{proof}

\noindent {\bf Aknowledgements.} The authors wish to thank G. Godefroy, M. Martin, A. Proch\'{a}zka and A. Valette for fruitful discussions on the subject of this paper.

\begin{bibsection}
\begin{biblist}

\bib{AlbiacKalton2016}{book}{
   author={Albiac, F.},
   author={Kalton, N. J.},
   title={Topics in Banach space theory - Second Edition},
   series={Graduate Texts in Mathematics},
   volume={233},
   publisher={Springer},
   place={New York},
   date={2016},
}

\bib{BenyaminiLindenstrauss2000}{book}{
    author={Benyamini, Y.},
    author={Lindenstrauss, J.},
     title={Geometric nonlinear functional analysis. Vol. 1},
    series={American Mathematical Society Colloquium Publications},
    volume={48},
 publisher={American Mathematical Society},
     place={Providence, RI},
      date={2000},
}

\bib{DevilleGodefroyZizler1993}{book}{
   author={Deville, R.},
   author={Godefroy, G.},
   author={Zizler, V.},
   title={Smoothness and renormings in Banach spaces},
   series={Pitman Monographs and Surveys in Pure and Applied Mathematics},
   volume={64},
   publisher={Longman Scientific \& Technical},
   place={Harlow},
   date={1993},
}

\bib{DilworthKutzarovaLancienRandri2017}{article}{
    author={Dilworth, S. J.},
    author={Kutzarova, D.},
    author={Lancien, G.},
    author={Randrianarivony, L.},
     title={Equivalent norms with the property $\beta$ of Rolewicz},
   journal={Revista de la Real Academia de Ciencias Exactas, F\'{i}sicas
   y Naturales. Serie A. Matem\'{a}ticas},
    volume={111},
   date={2017},
   number={1},
   pages={101\ndash 113},
}

\bib{GhysDelaHarpe}{book}{
   author={Ghys, E.},
   author={de la Harpe, P.},
   title={Sur les groupes hyperboliques d'apr\`es Mikhael Gromov},
   series={Progress in Mathematics},
   volume={50},
   publisher={Birkh\"auser},
   date={1990},
}

\bib{Godefroy2016}{article}{
   author={Godefroy, G.},
   title={On norm attaining Lipschitz maps between Banach spaces},
   journal={Pure and Applied Functional Analysis},
   volume={1},
   date={2016},
   pages={39-46},
}

\bib{GodefroyKaltonLancien2001}{article}{
    author={Godefroy, G.},
    author={Kalton, N. J.},
    author={Lancien, G.},
     title={Szlenk indices and uniform homeomorphisms},
   journal={Trans. Amer. Math. Soc.},
    volume={353},
      date={2001},
     pages={3895\ndash 3918},
}

\bib{GodefroyLancienZizler2014}{article}{
    author={Godefroy, G.},
    author={Lancien, G.},
    author={Zizler, V.},
     title={The non linear geometry of Banach spaces after Nigel Kalton},
   journal={Rocky Mountain J. of Math},
    volume={44},
      date={2014},
    number={5},
     pages={1529\ndash 1583},
}

\bib{Gorelik1994}{article}{
   author={Gorelik, E.},
   title={The uniform nonequivalence of $L\sb p$ and $l\sb p$},
   journal={Israel J. Math.},
   volume={87},
   date={1994},
   pages={1--8},
}

\bib{Gromov1987}{article}{
   author={Gromov, M.},
   title={Hyperbolic groups, Essays in group theory},
   journal={Math. Sci. Res. Inst. Publ.},
   publisher={Springer},
   number={},
   date={1987},
   pages={75--263},
}

\bib{JohnsonLindenstraussPreissSchechtman2002}{article}{
   author={Johnson, W. B.},
   author={Lindenstrauss, J.},
   author={Preiss, D.},
   author={Schechtman, G.},
   title={Almost Fr\'echet differentiability of Lipschitz mappings between
   infinite-dimensional Banach spaces},
   journal={Proc. London Math. Soc.},
   volume={84},
   date={2002},
   number={3},
   pages={711--746},
}

\bib{JohnsonLindenstraussSchechtman1996}{article}{
   author={Johnson, W. B.},
   author={Lindenstrauss, J.},
   author={Schechtman, G.},
   title={Banach spaces determined by their uniform structures},
   journal={Geom. Funct. Anal.},
   volume={6},
   date={1996},
   pages={430--470},
}

\bib{Milman1971}{article}{
    author={Milman, V. D.},
     title={Geometric theory of Banach spaces. II. Geometry of the unit
            ball},
  language={Russian},
   journal={Uspehi Mat. Nauk},
    volume={26},
      date={1971},
     pages={73\ndash 149},
     note={English translation: Russian Math. Surveys {\bf 26} (1971), 79--163},
}

\end{biblist}
\end{bibsection}

\end{document}